\newtheorem{theorem}{Theorem}[section]
\newtheorem{lemma}[theorem]{Lemma}
\newtheorem{conjecture}[theorem]{Conjecture}
\newtheorem*{burning number conjecture}{Burning number conjecture}
\newtheorem*{first main theorem}{First Main Theorem}
\newtheorem*{second main theorem}{Second Main Theorem}
\newtheorem*{claim}{Claim}
\theoremstyle{definition}
\newtheorem{definition}[theorem]{Definition}
\theoremstyle{remark}
\newtheorem{remark}[theorem]{Remark}
\newtheorem{case}{Case}
\newtheorem{subcase}{Case}
\numberwithin{subcase}{case}
\newtheorem{special case}{Special Case}
\subjclass[2000]{05C85, 05C82}
\begin{document}

\title[Burning Numbers of Path Forests and Spiders]{Graph Burning: Tight Bounds on the Burning Numbers of Path Forests and Spiders}
\author{Ta Sheng Tan}
\address{Institute of Mathematical Sciences\\
University of Malaya\\  
50603 \linebreak
Kuala Lumpur, Malaysia}
\email{tstan@um.edu.my}
\author{Wen Chean Teh\! $^*$}
\address{School of Mathematical Sciences\\
	Universiti Sains Malaysia\\
	11800 USM,\linebreak
	Malaysia}
\email{dasmenteh@usm.my}
\keywords{Burning number conjecture; Graph algorithm; Spider; Path forest}

\begin{abstract}
In 2016, Bonato, Janssen, and Roshanbin introduced graph burning as a  discrete process that models the spread of social contagion. Although the burning process is a simple algorithm, the problem of determining the least number of rounds needed to completely burn a graph, called the burning number of the graph, is \textbf{NP}-complete even for elementary graph structures like spiders. An early conjecture that every connected graph of order $m^2$ can be burned in at most $m$ rounds is the main motivator of this study. Attempts to prove the conjecture have resulted in various  upper bounds for the burning number and validation of the conjecture for certain elementary classes of graphs. In this work, we find a tight upper bound for the order of a spider for it to be burned within a given number of rounds. Our result shows that the tight bound depends on the structure of the spider under consideration, namely the number of arms. This strengthens the previously known results on spiders in relation to the conjecture.
More importantly, this opens up potential enquiry into the connection between burning numbers and certain characteristics of graphs. Finally, a tight upper bound for the order of a path forest for it to be burned within a given number of rounds is obtained, thus completing previously known partial corresponding results.
\end{abstract}

\maketitle

\let\thefootnote\relax\footnotetext{$^*$ Corresponding author}

\section{Introduction}
Graph burning is a process that models the spread of social contagion \cite{bonato2016how, roshanbin2016burning}.
It is  a discrete-time deterministic process. Suppose $G$ is a simple finite graph. Initially, every vertex of $G$ is \emph{unburned}. At the beginning of every \emph{round} $t\geq 1$,
a \emph{burning source} is place at an unburned vertex, turning its status to \emph{burned}. A burned vertex remains burned  throughout the process. If a vertex is already burned at round $t-1$, then its unburned neighbors (if any) become automatically \emph{burned} at (the end of) round $t$.  The burning process is completed when all vertices are burned. The \emph{burning number of $G$}, denoted by $b(G)$, is the least number of rounds needed  for the burning process to be completed.
Equivalently, $b(G)$ is the least number $m$ such that the set of vertices of $G$ can be covered by $m$ balls of radii $0,1,2, \dotsc, m-1$ respectively, where any vertex of $G$ at graph distance at most $r$ from $v$ can be covered by the
ball of radius $r$ placed at $v$. 
We say that a graph is \emph{$m$-burnable} if its burning number is at most $m$.


The study on graph burning can be focused on trees because for any connected graph $G$, we see from \cite{bonato2016how} that
$$b(G)=\min \{ \, b(T)\mid T \text{ is a spanning tree of } G \,\}.$$
Furthermore, if $T'$ is a subgraph  of a tree $T$, then $b(T')\leq b(T)$, which is generally not true if $T$ were not a tree \cite{bonato2016how}. 
In fact, even elementary tree structures like spiders can be deemed complicated  because it is \mbox{\textbf{N\!P}-complete} \cite{bessy2017burning}
to compute their burning numbers.

In this exposition, unless stated otherwise, lower case variables are assumed to be integer-valued and positive.

\begin{theorem}\cite{bonato2016how}\label{2803a}
	The burning number of every path of order $m$ is $\lceil \sqrt{m}\rceil$.	
\end{theorem}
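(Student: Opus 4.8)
The plan is to prove the two inequalities $b(P_m)\ge \lceil\sqrt m\rceil$ and $b(P_m)\le \lceil\sqrt m\rceil$ separately, where $P_m$ is the path on $m$ vertices, using throughout the ball-covering reformulation recalled above: $b(G)\le k$ exactly when $V(G)$ can be covered by $k$ balls of radii $0,1,2,\dots,k-1$.

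For the lower bound, the key observation is that in a path a ball of radius $r$ centered at any vertex contains at most $2r+1$ vertices. Hence, if $P_m$ is burnable in $k$ rounds, the $k$ covering balls together contain at most $\sum_{r=0}^{k-1}(2r+1)=k^2$ vertices, which forces $m\le k^2$, i.e.\ $k\ge\sqrt m$; since $k$ is an integer, $k\ge\lceil\sqrt m\rceil$. Therefore $b(P_m)\ge\lceil\sqrt m\rceil$.

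For the upper bound, set $k=\lceil\sqrt m\rceil$, so that $m\le k^2$. Since $P_m$ is a subpath of $P_{k^2}$ and the burning number does not increase on subgraphs of a tree (as recalled in the introduction), it suffices to show $b(P_{k^2})\le k$, that is, to cover the $k^2$ vertices of $P_{k^2}$, listed in order as $v_1,\dots,v_{k^2}$, by balls of radii $0,1,\dots,k-1$. I would partition $\{1,\dots,k^2\}$ into $k$ consecutive blocks of sizes $2k-1,2k-3,\dots,3,1$; these sizes sum to $k^2$, and each is an odd number $2r+1$ for a distinct $r\in\{0,1,\dots,k-1\}$, so placing the ball of radius $r$ at the middle vertex of the corresponding block covers precisely that block. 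This yields the desired covering, hence $b(P_{k^2})\le k$ and so $b(P_m)\le k=\lceil\sqrt m\rceil$.

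The argument has no real conceptual obstacle; the only care needed is the bookkeeping in the upper bound — verifying that the block sizes sum to $k^2$ and that each block has odd cardinality so its midpoint is a genuine vertex. If one prefers to argue directly with the burning process rather than through the covering characterization, one must additionally check that the center of each block is still unburned at the round in which its source is placed, which is immediate since the blocks are pairwise disjoint. The reduction to $P_{k^2}$ via subgraph monotonicity is a minor convenience that sidesteps a separate treatment of the case when $m$ is not a perfect square.
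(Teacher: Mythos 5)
Your proof is correct. The paper quotes this theorem from Bonato, Janssen, and Roshanbin without reproving it, and your argument is the standard one from that source: the lower bound by counting $\sum_{r=0}^{k-1}(2r+1)=k^2$ vertices coverable by the $k$ balls, and the upper bound by partitioning the path into consecutive blocks of odd sizes $2k-1,2k-3,\dotsc,1$ each burned from its midpoint, which is exactly the observation the paper isolates as Remark~\ref{1604a}.
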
 

Bonato, Janssen, and Roshanbin \cite{bonato2016how} showed that the burning number of any connected graph of order $m$ is bounded above by $2\sqrt{m}-1$.
In the same exposition, they made the following conjecture, which remains open.

\begin{burning number conjecture}
The burning number of every connected graph of order $m$ is at most $\lceil \sqrt{m}\rceil$.
\end{burning number conjecture}

Attempts to prove the burning number conjecture result in improved upper bounds.
Bessy, Bonato, Janssen, Rautenbach, and Roshanbin \cite{bessy2018bounds} proved that $$b(G)\leq \left(\sqrt{\frac{32}{19}}+o(1)\right)\sqrt{m}$$
for every connected graph $G$ of order $m$.
Later, Land and Lu \cite{land2016upper} improved this bound to $\left\lceil \frac{-3+\sqrt{24m+33}}{4}\right\rceil$.
However, the burning number conjecture remains open. Meanwhile, 
other authors have taken a different approach and verified the burning number conjecture for some classes of graphs, including, the generalized Petersen Graphs \cite{sim2018burning}, the hypercube graphs \cite{mitsche2018burning}, and the circulant graphs \cite{fitzpatrick2017burning}. 
Unsurprisingly, the burning number conjecture has also elevated the interests in the class of spiders, which is the main focus of this study.

Suppose $n\geq 3$. 
An \emph{$n$-spider} is a tree with exactly one vertex of degree $n$, called the \emph{head} of the spider, and every other vertex has degree at most two.
The paths from the head to the leaves of the spider are called \emph{arms}. The distance along an arm from the head to its leaf is its \emph{arm length}, which equals the number of vertices on the arm excluding the head.
The vertex next to the head of an arm is considered the first vertex of the arm and so forth.
We say that an $n$-spider is \emph{balanced} if all its arms have the same length.
For our induction purposes, we also regard a path of length $l$ as a $2$-spider with arm lengths $l_1$ and $l_2$ (not unique) such that $l_1+l_2=l$.

\begin{theorem}\cite{bonato2018bounds, das2018burning}\label{2703a}
Every spider of order at most $m^2$ is  $m$-burnable.	
\end{theorem}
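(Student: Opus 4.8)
The plan is to use the equivalent description of burnability recalled in the introduction: $G$ is $m$-burnable exactly when $V(G)$ can be covered by $m$ balls of radii $0,1,\dots,m-1$. Thus, writing the spider with arm lengths $l_1\ge l_2\ge\dots\ge l_n$ (allowing $n=2$, a path, as a degenerate spider per the convention above) and $1+\sum_i l_i\le m^2$, the goal is to produce balls of radii $0,1,\dots,m-1$ whose union is $V(G)$.

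I would induct on $m$, the cases $m\le 2$ being immediate since then the spider is a path (Theorem~\ref{2803a}) or $K_{1,3}$. For the inductive step the organizing idea is to spend the largest ball, of radius $m-1$, on deleting a block of at most $2m-1$ vertices so that the remaining graph is again a spider or a path, necessarily of order at most $m^2-(2m-1)=(m-1)^2$, and then to finish with the balls of radii $0,\dots,m-2$ by the induction hypothesis. This is transparent when one arm is long: if $l_1\ge 2m-1$, centring the radius-$(m-1)$ ball on arm $1$ at distance $m-1$ from its leaf removes precisely the last $2m-1$ vertices of arm $1$, leaving a spider (or, if $n=3$ and arm $1$ disappeared, a path) of order at most $(m-1)^2$, and we are done by induction.

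The substance is the case where every arm has length at most $2m-2$; here $1+\sum_i l_i\le m^2$ forces at most $m-1$ arms to have length at least $m$. The naive move places the radius-$(m-1)$ ball at the head $h$, covering $h$ and the first $m-1$ vertices of every arm and leaving a path forest whose components are the tails (each of length at most $m-1$) of the at most $m-1$ arms of length $\ge m$. Covering this forest reduces, since a radius-$r$ ball accounts for $2r+1$ consecutive vertices of a path, to distributing the odd numbers $1,3,5,\dots,2m-3$ into as many groups as there are tails, with the group assigned to a tail of length $t$ summing to at least $t$ --- one tail being allowed to use several balls (a tail of four vertices can be taken by a radius-$1$ ball together with a radius-$0$ ball). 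This works for most length profiles but not all: a spider with $m-1$ arms each of length $m+1$ leaves $m-1$ tails of length two, whereas the balls of radii $0,\dots,m-2$ include only $m-2$ of radius $\ge 1$ and a single one of radius $0$, one ball short of what head-centred peeling can absorb. The remedy is to offset the largest ball into a longest arm, centring it at a vertex of arm $1$ at some short distance $s$ from $h$: it still reaches $h$ and so still clips every other arm, it now also engulfs arm $1$ entirely (one tail disappears, while the others grow by $s$), and for the right $s$ this trade restores feasibility of the distribution of $1,3,\dots,2m-3$.

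The main obstacle is precisely this last bookkeeping: one must show that for every admissible profile with all arms short there is a placement of the radius-$(m-1)$ ball --- at the head, or offset by a suitable amount into a longest arm --- for which the residual path forest has order at most $(m-1)^2$, at most $m-1$ components, and a length profile dominated, in the partition sense above, by the odd numbers $1,3,\dots,2m-3$; I expect this to need a further split according to how many arms have length near $2m-2$ and how many are very short. That packing statement about path forests with boundedly many components is a weak forerunner of the sharp path-forest bound obtained at the end of the paper, and making it go through uniformly is where the real care lies; the surrounding induction on $m$ is routine.
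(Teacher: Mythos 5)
Your overall architecture --- spend the radius-$(m-1)$ ball on peeling $2m-1$ vertices off a long arm and induct on $m$, otherwise centre it at or near the head and cover the residual tails with the balls of radii $0,\dots,m-2$ --- is the same skeleton that underlies the paper's treatment (note that the paper does not prove Theorem~\ref{2703a} directly; it cites it and then derives strengthenings, Theorems~\ref{1103c} and~\ref{2702b}, which subsume it). Your Case~1 ($l_1\ge 2m-1$) is fine. The problem is that the proposal stops exactly where the proof begins: the entire technical content of the result is the claim you defer, namely that for \emph{every} admissible profile with all arms of length at most $2m-2$, some placement of the largest ball leaves a path forest whose components can be partitioned among the balls of radii $0,\dots,m-2$. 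You verify this for a single extremal profile ($m-1$ arms of length $m+1$) and then write that you ``expect'' a further case split to make it go through uniformly. That expectation is the theorem; as written this is a correct reduction of the statement to its hardest step, not a proof.

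To see that the deferred step is not routine bookkeeping, compare how the paper handles the analogous situation for the slightly larger order $m^2+n-2$: Lemma~\ref{0403a} hinges on the inequality $k(3m-2-2k)\ge m^2-m$ over the range $\left\lfloor \frac{m+1}{2}\right\rfloor\le k\le m-1$, and even then three special cases remain in which neither the head placement nor an offset along a longest arm suffices --- the first source must instead be placed at the first vertex of a \emph{shortest} long arm, with the residue absorbed by a separate path-forest lemma (Lemma~\ref{0405a}); the general path-forest covering you need is essentially Theorem~\ref{2703c}/Theorem~\ref{2003b}, itself a double induction with exceptional configurations. Your repertoire of two placements (head, or offset into arm~1) may well suffice for the weaker bound $m^2$, but that has to be established for all profiles, not just the one you checked; until that lemma is stated and proved, the argument has a genuine gap.
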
	

In this study, we strengthen Theorem~\ref{2703a} by finding an exact tight bound $N(n,m)$ such that every $n$-spider of order at most $N(n,m)$ is $m$-burnable. Our work is motivated by the instinct that the more deviation a tree is from a path, the less likely for it to violate the burning number conjecture. More precisely, the more arms a spider has, the larger its order has to be before 
it becomes unburnable within a given number of rounds. Before we spell out our main result, we give the following definition.

\begin{definition}
For every $m\geq 2$ and $n\geq 2$, let $I^{sp}_{n,m}$ denote the largest integer $N$ such that every $n$-spider of order at most $N$ is $m$-burnable.
\end{definition}

 For example, $I^{sp}_{3,2}=5$ and $I^{sp}_{3,3}=9$. 
 By Theorem~\ref{2803a}, $I_{2,m}=m^2$ for all $m\geq 2$.
 Since the $n$-spider of order $m^2+n-1$
 with arm lengths $m^2-1, \underbrace{1,1, \dotsc, 1}_{ n-1 \text{ times}}$
 contains the path of order $m^2+1$ as a subtree, it is not $m$-burnable. Hence, by Theorem~\ref{2703a}, 
 $m^2\leq I^{sp}_{n,m}\leq m^2+n-2$ for every integer $m,n\geq 2$. Our first main result completely ascertains the values $I^{sp}_{n,m}$.

\begin{first main theorem}
Let $n\geq 2$. Then
\begin{enumerate}
	\item $I^{sp}_{n,m}= m^2+n-2$ for all $m>n$;
	\item  $I^{sp}_{n,m}= m^2+n-m$ for all $2\leq m\leq n$.	
\end{enumerate}	
\end{first main theorem}


\begin{remark}
	If one of the arm of an $n$-spider (for $n>2$) has length one, then deleting that arm results in a subspider with the same burning number. 
	\end{remark}

It is conceivable that, for most spiders, 
the most efficient burning strategy  would involve placing  the first burning source at the head. When this is the case, the remaining vertices unburned by the first source in the whole burning process form 
a disjoint union of independent paths, which is called \emph{path forest}. 
Hence, the study of the burning of spiders is closely connected to that of path forests.
 By \emph{path orders} of a path forest, we mean  the respective numbers of vertices in each of its paths.


The earliest result regarding path forests says that $b(T) \leq \sqrt{\vert T\vert}+n-1$ for any path forest $T$ with $n$ paths \cite{bonato2016how}.
Bonato and Lidbetter \cite{bonato2018bounds} obtained two new upper bounds, which together 
provide improvement over the previous bound. Independently, Das, Dev, Sadhukhan, Sahoo, and Sen \cite{das2018burning}  obtained an exact tight bound on the order of certain path forest for it to be burned within a given number of rounds.

\begin{theorem}\cite{das2018burning}\label{2703c}
Let $m\geq n\geq 2$ and suppose $T$ is a path forest with $n$ paths such that all but possibly one of the paths have order at most $m$. If 
$$\vert T\vert \leq m^2-(n-1)^2$$	
	then $T$ is $m$-burnable.
	\end{theorem}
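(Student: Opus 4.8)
The plan is to translate the statement into a combinatorial ``coin-distribution'' problem and then exhibit an explicit assignment. By the ball characterisation of $b$ recalled in the introduction, $T$ is $m$-burnable if and only if its vertices can be covered by $m$ balls of radii $0,1,2,\dots,m-1$. Writing $T=P_1\sqcup P_2\sqcup\dots\sqcup P_n$ and $p_i=|P_i|$, every ball in $T$ lies within a single path, so such a covering amounts to a partition of the radius set $\{0,1,\dots,m-1\}$ into parts $R_1,\dots,R_n$ with the property that, setting $\operatorname{cov}(R):=\sum_{r\in R}(2r+1)$, one has $\operatorname{cov}(R_i)\ge p_i$ for every $i$. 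The only fact needed here is elementary: a path of order $p$ is coverable by balls whose radii form a given multiset $R$ iff $\operatorname{cov}(R)\ge p$ — necessity because a ball of radius $r$ covers at most $2r+1$ vertices, sufficiency by placing the balls consecutively from one end of the path (largest first). I would also isolate the identities that drive everything: $\operatorname{cov}(\{0,1,\dots,k-1\})=k^2$, so any $k$ distinct radii have $\operatorname{cov}\ge k^2$, and $\operatorname{cov}(\{0,\dots,m-1\})=m^2$. In particular $(n-1)^2$ is exactly the coverage of the $n-1$ smallest balls, and the hypothesis $|T|\le m^2-(n-1)^2$ says the surplus $m^2-|T|$ is at least that much — this surplus is what will absorb the inevitable wastage.

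For the construction, reorder the paths so that $p_1\ge p_2\ge\dots\ge p_n$; by hypothesis $p_2,\dots,p_n\le m$, and $P_1$ is the (possibly) long one. The idea is to cover each short path with a single, as-cheap-as-possible ball and give every remaining ball to $P_1$. Put $\rho_i=\lceil(p_i-1)/2\rceil$, the least radius that covers $P_i$ by itself; processing $i=n,n-1,\dots,2$, let $s_i$ be the least radius $\ge\rho_i$ not yet chosen, and set $R_i=\{s_i\}$ for $i\ge2$ and $R_1=\{0,\dots,m-1\}\setminus\{s_2,\dots,s_n\}$. A pigeonhole count gives $s_i\le\rho_i+(n-i)$, and since $2\rho_i+1-p_i\le1$ the total wastage is
\[
\sum_{i=2}^{n}\bigl((2s_i+1)-p_i\bigr)\ \le\ \sum_{i=2}^{n}\bigl(1+2(n-i)\bigr)\ =\ (n-1)^2 .
\]
Hence $\operatorname{cov}(R_1)=m^2-\sum_{i=2}^{n}(2s_i+1)\ge m^2-\sum_{i=2}^{n}p_i-(n-1)^2\ge p_1$, the last inequality being exactly $|T|\le m^2-(n-1)^2$; so $P_1$ is covered too by the single-path fact, and we are done — provided the construction never runs out of radii, i.e.\ provided each $s_i\le m-1$.

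That feasibility is the crux and the main obstacle. It is automatic when the paths are comfortably outnumbered by the radii (roughly $2n\le m+3$, since $s_i\le\rho_i+(n-i)\le\lceil(m-1)/2\rceil+(n-2)$), but it can fail when $n$ is close to $m$. In that range, however, $|T|\le m^2-(n-1)^2=(m-n+1)(m+n-1)$ forces the paths to be short on average, so only boundedly many of $p_2,\dots,p_n$ can be anywhere near $m$; the fix is to adjust the assignment — give the very largest balls to the very shortest paths (even a radius-$(m-1)$ ball covers a single vertex, which frees the middle-sized balls for the middle-sized paths) and cover a bounded number of short paths with two balls rather than one, then rerun the same ``wastage $\le(n-1)^2$'' bookkeeping on the modified assignment. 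Carrying this out cleanly — organising the finitely many sub-cases by how many of $p_2,\dots,p_n$ are comparable to $m$, and checking that the wastage estimate survives each adjustment — is where the real effort lies. An induction on $n$ (delete the shortest path together with one radius and decrease $m$ by one) is a tempting shortcut, but it forces one to preserve the hypotheses ``at most one long path'' and ``$|T|\le m^2-(n-1)^2$'' across the reduction, which turns out to be the same difficulty wearing a different hat.
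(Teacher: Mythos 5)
There is a genuine gap, and you have put your finger on it yourself: the feasibility of the assignment. Your reformulation is sound --- the equivalence of $m$-burnability of a path forest with a partition of the radius set $\{0,1,\dots,m-1\}$ into parts $R_i$ satisfying $\operatorname{cov}(R_i)\ge p_i$ is correct, as is the wastage bound $\sum_{i\ge 2}\bigl((2s_i+1)-p_i\bigr)\le (n-1)^2$ for your greedy single-ball assignment --- but the argument only closes if every chosen radius satisfies $s_i\le m-1$, and you establish this only when $n$ is well below $m$ (roughly $2n\le m+3$). The regime where $n$ is close to $m$ is not a peripheral nuisance: it is exactly where the theorem is delicate (the extremal non-burnable forest with path orders $2,\dots,2,m^2-n^2+2$ sits at $|T|=m^2-(n-1)^2+1$, and for $m=n$ the bound $2m-1$ leaves essentially no slack), and your proposed repair --- reassigning the largest radii to the shortest paths, splitting a bounded number of paths over two balls, and ``rerunning the bookkeeping'' --- is a plan rather than a proof. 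In particular, it is not verified that the modified assignment still has total wastage at most $(n-1)^2$, nor that the case analysis by ``how many of $p_2,\dots,p_n$ are comparable to $m$'' can be organised uniformly in $m$ and $n$. As written, the crux of the theorem is asserted, not proved.

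For comparison: this statement is quoted from Das et al.\ and not reproved here, but Theorem~\ref{2003b} of the paper strengthens it (dropping the hypothesis that all but one path has order at most $m$, raising the bound to $m^2-(n-1)^2+1$, and identifying the unique exceptional forest), and its proof is essentially the induction you dismiss at the end as a ``tempting shortcut'' --- made to work by inducting on the \emph{longest} path rather than the shortest. If $l_1\ge 2m$, one removes $2m-1$ vertices from the longest path and passes to the pair $(m-1,n)$; if $l_1\le 2m-1$, one deletes the longest path entirely and passes to $(m-1,n-1)$, using $l_1>2(m-n+1)$ (a consequence of $m^2-(n-1)^2+1-2n(m-n+1)=(m-n)^2>0$) to check that the remaining forest satisfies the inductive bound $(m-1)^2-(n-2)^2+1$. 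Strengthening the induction statement so that it carries the exceptional configuration along is precisely what dissolves the difficulty you ran into. I would recommend either adopting that route, or supplying an actual proof that under the hypotheses $p_2,\dots,p_n\le m$ and $|T|\le m^2-(n-1)^2$ your greedy (or suitably modified) assignment never exhausts the radius set.
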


The upper bound is tight because the path forest 
of order $m^2-(n-1)^2+1$
and path orders  $\underbrace{2,2,\dotsc, 2}_{n-1 \text{ times}}, m^2-n^2+2$ is not $m$-burnable.	
We suspected that apart from this unique path forest, every other path forest of order $m^2-(n-1)^2+1$
is $m$-burnable. This leads to our second main result, which says that the condition in Theorem~\ref{2703c} placed on the path forests can be lifted completely. 

\begin{definition}
	For every $m\geq n\geq 2$, let $I^{p\!f}_{n,m}$ denote the largest integer $N$ such that every path forest with $n$ paths of order at most $N$ is $m$-burnable.
\end{definition}

\begin{second main theorem}\label{2803c}
$I^{p\!f}_{n,m}= m^2-(n-1)^2$ for all $m\geq n\geq 2$.
\end{second main theorem}

We end the introduction by highlighting some of the works on graph burning.
Bounds on the burning numbers of Cartesian products, strong products, and lexicographic products of graphs have been studied in \cite{mitsche2018burning}.
 Randomness is injected into the study of graph burning in \cite{mitsche2017burning}. 
 A variance of the burning process, where the underlying graph may evolve over time, were recently introduced by Bonato, Gunderson, and Shaw \cite{bonato2018burning2}. Finally, since the graph burning problem is \textbf{N\!P}-complete,
 it has triggered the pursuit of polynomial time approximation algorithms for the burning numbers of classes of graphs \cite{bessy2017burning, bonato2018approximation, bonato2018bounds}.

\section{Main Result on Spiders} \label{0701a}  

In this section, we prove a series of lemmas and theorems leading to our First Main Theorem. The technical lemmas deal with path forests (Lemma~\ref{0405a} and Lemma~\ref{0603c}) and spiders (Lemma~\ref{0403a} and Lemma~\ref{1804a})   with certain specified structures. First, we make a simple observation that will be repeatedly used in the proofs throughout the paper.

\begin{remark}\label{1604a}
During a burning process, a single burning source can be used to burn a path of order (at most) $2m-1$ in $m$ rounds, by placing	this burning source at the center of the path.	
\end{remark}

\begin{lemma}\label{0405a}
Let $n\geq 2$ and suppose $T$ is a path forest with path orders $l_1\geq l_2\geq \dotsb \geq l_n$ such that $l_{n}=1$ and $l_{n-1}\geq 3$. If the order of $T$ is at most $4n-4$, then $T$ is $n$-burnable.
\end{lemma}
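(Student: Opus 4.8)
The plan is to use the ball-covering characterization of the burning number recalled in the introduction: a graph is $n$-burnable if and only if its vertex set can be covered by $n$ balls of radii $0,1,2,\dotsc,n-1$. On a path, a ball of radius $r$ covers $2r+1$ consecutive vertices (Remark~\ref{1604a}), so it suffices to distribute the radii $0,1,\dotsc,n-1$, one to each of the $n$ paths of $T$, so that the path receiving radius $r$ has order at most $2r+1$.

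First I would hand the radius-$0$ ball to the singleton path $l_n=1$. This leaves the radii $1,2,\dotsc,n-1$ to be matched bijectively to the paths of orders $l_1\geq l_2\geq\dotsb\geq l_{n-1}$, where now $\sum_{i=1}^{n-1}l_i\leq 4n-5$ and, since $l_{n-1}\geq 3$, every one of these orders is at least $3$. The natural matching is the greedy one: give the ball of radius $n-i$ to the $i$-th largest path. So the whole lemma reduces to the family of inequalities $l_i\leq 2(n-i)+1$ for $1\leq i\leq n-1$, after which the balls of radii $n-1,n-2,\dotsc,1,0$ placed at the centres of the paths $l_1,l_2,\dotsc,l_{n-1},l_n$ cover $T$.

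The one real step is verifying these inequalities, and it is the only place the bound $|T|\leq 4n-4$ is used. I would argue by contradiction: if $l_i\geq 2(n-i)+2$ for some $i\in\{1,\dotsc,n-1\}$, then, as the orders are non-increasing, $l_1,\dotsc,l_i\geq 2(n-i)+2$ while $l_{i+1},\dotsc,l_{n-1}\geq 3$ and $l_n=1$, so
$$|T|\;\geq\; i\bigl(2(n-i)+2\bigr)+3(n-1-i)+1.$$
Writing $j=n-i$, the right-hand side minus $4n-4$ simplifies to $2i(j-1)-(j-2)\geq j>0$ for all $i,j\geq 1$, which contradicts $|T|\leq 4n-4$. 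Hence $l_i\leq 2(n-i)+1$ for every $i$, and the assignment above completes the proof. I do not expect a genuine obstacle; the only care needed is to keep the counting argument honest about the non-increasing order and to check the boundary cases (in particular $n=2$, where the hypotheses force $l_1=3$, handled by the radius-$1$ ball).
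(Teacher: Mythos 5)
Your proof is correct, and the burning plan it produces --- the ball of radius $n-i$ at the centre of the $i$-th largest path and the radius-$0$ ball at the singleton --- is exactly the strategy the paper's inductive proof yields when unrolled. The difference is in the verification. The paper inducts on $n$: it shows $4\leq l_1\leq n+1$, deletes the longest path (leaving a forest of order at most $4(n-1)-4$ that still satisfies the hypotheses), and burns that path with the first source placed at its centre. You instead certify all the inequalities $l_i\leq 2(n-i)+1$ simultaneously by the counting bound $\vert T\vert\geq i\bigl(2(n-i)+2\bigr)+3(n-1-i)+1$, and your arithmetic checks out: writing $j=n-i$, the excess over $4n-4$ is $2i(j-1)-(j-2)\geq j\geq 1$ for $1\leq i\leq n-1$. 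Your version is non-inductive and makes explicit where the hypotheses $l_n=1$, $l_{n-1}\geq 3$, and $\vert T\vert\leq 4n-4$ enter; the paper's version is terser but buries the same arithmetic in the bound $l_1\leq n+1$. One small point worth stating explicitly in your write-up: since each path receives exactly one ball and the paths lie in distinct components, every source is placed at a vertex that is still unburned at its round, so the ball covering genuinely realizes a valid burning sequence.
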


\begin{proof}
We argue by induction. The base step $n=2$ is straightforward. For the induction step, suppose $n>2$ and the lemma is true for $n-1$. We may assume that $T$ is a path forest of order $4n-4$ with path orders $l_1\geq l_2\geq \dotsb \geq l_n$ such that $l_{n}=1$ and $l_{n-1}\geq 3$. Then it can be deduced that $4\leq l_1\leq n+1$. Let $T'$ be the path forest obtained by deleting the first path of $T$. Since $\vert T'\vert\leq 4(n-1)-4$, by the induction hypothesis, $T'$ is $(n-1)$-burnable. Therefore, $T$ is $n$-burnable by placing the first burning source at the/a center of the first path of $T$ and then follow the burning sequence of $T'$.
\end{proof}

\begin{lemma}\label{0403a}
Let $m>n\geq 2$ and suppose $T$ is an $n$-spider of order  $m^2+n-2$ such that every arm length is at most $2m-1$. Then $T$ is \mbox{$m$-burnable}.
\end{lemma}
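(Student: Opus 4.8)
The plan is to produce an explicit $m$-round burning, and the obvious first move is to place the initial burning source at the head $h$ of $T$. After $m-1$ more rounds its ball of radius $m-1$ burns $h$ together with the first $\min(l_i,m-1)$ vertices of every arm, so the vertices still unburned form a \emph{path forest} $T'$ whose components are the tails, of order $l_i-(m-1)$, of the arms with $l_i\ge m$; since every arm has length at most $2m-1$, every component of $T'$ has order at most $m$. It therefore suffices to burn $T'$ in $m-1$ further rounds, that is, to cover its components by balls of radii $m-2,m-3,\dots,1,0$. Two preliminary observations I would record first: from $\vert T\vert=m^2+n-2$ and $m>n$ one gets $m+1\le l_1\le 2m-1$, so $T'$ is nonempty; and if $k$ denotes the number of arms of length at least $m$ then $1\le k\le n\le m-1$ and $\vert T'\vert\le m^2-3-k(m-2)$.

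If $k=1$, then $T'$ is a single path of order at most $m$ and is $(m-1)$-burnable by Theorem~\ref{2803a}. If $k\ge 2$, then for most arm-length profiles $T'$ is burned in $m-1$ rounds by a direct argument: sort the components $c_1\ge\cdots\ge c_k$ and assign to $c_i$ the ball of radius $m-1-i$, centred as in Remark~\ref{1604a} (one may also invoke Theorem~\ref{2703c} in many of these cases). The bound on $\vert T'\vert$ together with $\sum_i l_i=m^2+n-3$ forces at most one component to overflow its assigned ball, and a spare ball among $0,1,\dots,m-2$ then absorbs the overflow. The one family where no spare ball exists --- and where, I expect, the real difficulty lies --- is $n=m-1$ with \emph{every} arm of length at least $m+1$: then $T'$ has exactly $m-1$ components, each of order at least $2$, while exactly the $m-1$ balls of radii $0,\dots,m-2$ are available, so every component would need a ball of radius at least $1$, which is impossible; thus the head is the wrong place to start.

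In that exceptional family, $\sum_i l_i=m^2+m-4$ prevents all arms from having length at least $m+2$, so some arm has length exactly $m+1$; I would instead place the first burning source at the first vertex of such an arm. Its radius-$(m-1)$ ball now burns that whole arm except its last vertex, together with $h$ and the first $m-2$ vertices of every other arm, and what remains is a path forest with $m-1$ components --- one of order $1$, the rest of order at least $3$ --- whose total order is exactly $4(m-1)-4$. By Lemma~\ref{0405a} this path forest is $(m-1)$-burnable, and following the first source with its burning sequence completes an $m$-round burning of $T$. The bulk of the work is the bookkeeping: checking the crude inequalities carefully enough to guarantee that, outside the exceptional family, the head placement plus a one-overflow greedy assignment always succeeds, and, inside it, confirming that re-centring at a length-$(m+1)$ arm produces exactly the path forest for which Lemma~\ref{0405a} was made.
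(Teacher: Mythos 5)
Your overall strategy coincides with the paper's: first source at the head, greedy assignment of the radius-$(m-1-i)$ ball to the tail of the $i$-th longest arm, a single-overflow analysis with a spare ball, and a separate treatment of the one family with no spare ball. Your identification of that family ($n=m-1$ with every arm of length at least $m+1$), the observation that some arm must then have length exactly $m+1$, the re-centring at its first vertex, and the reduction to Lemma~\ref{0405a} via the count $4(m-1)-4$ are all correct and are exactly the paper's Special Case~3.

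The gap is that the two quantitative assertions you defer to ``bookkeeping'' are the entire content of the lemma, and neither is routine. (i) \emph{At most one overflow.} Let $k$ be the least index with $l_k>3m-2-2k$, i.e.\ the first tail exceeding its assigned ball. From $l_k\le 2m-1$ one gets $2k\ge m$, and with $k\le n\le m-1$ the quadratic $k(3m-2-2k)$ is minimized on $\left[\left\lceil \frac{m}{2}\right\rceil,\,m-1\right]$ at $k=m-1$, giving $k(3m-2-2k)\ge m^2-m$. Hence the total residual $\sum_{i\le k}\bigl(l_i-(3m-2-2k)\bigr)+\sum_{j>k}l_j$ is at most $n+m-3$; it is this bound that forces $l_{k+1}\le m-2$ (so no tail exists beyond the $k$-th and only one tail overflows) and, using $2k\ge m$ again, forces the overflow $l_k'=l_k-(3m-2-2k)$ to satisfy $l_k'\le 2$. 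Your statement that the bound on $\vert T'\vert$ ``forces at most one component to overflow'' is true but unsupported as written; I see no shorter route to it than this computation. (ii) \emph{The spare ball.} When $k=m-2$ the only spare ball has radius $0$, so ``a spare ball absorbs the overflow'' fails unless you additionally show the overflow is exactly one vertex there; the residual count above does yield $l_k'=1$ in precisely those tight cases ($m=n+2$, $k=n$ and $m=n+1$, $k=n-1$ — the paper's Special Cases~1 and~2), but your proposal does not address them. Once (i) and (ii) are supplied, your argument is complete and is essentially the paper's proof.
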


\begin{proof}
Let $T$ be an $n$-spider as in the lemma and let $2m-1\geq l_1\geq l_2\geq \dotsb \geq l_n$ denote the arm lengths of  $T$. So we have $l_1+l_2+\dotsb + l_n = m^2+n-3$. 
If $l_i \leq (m-1)+(2m-1-2i)$ for all $1\leq i\leq n$, then it is straightforward that $T$ is $m$-burnable. Indeed, we can place the first burning source at the head of $T$, which will burn the first $m-1$ vertices of each arm in $m$ rounds.
For each $1\leq i\leq n$, to burn the remaining path of order at most $2m-1-2i$ on the $i$-th arm, we place the $(i+1)$-th burning source at the center of this path (if this path is nonempty). From Remark~\ref{1604a}, we deduce that these $n+1$ burning sources are sufficient to burn $T$ in $m$ rounds.

Hence, we may assume there is a least  $k$ such that 
		$$l_{k} > (m-1)+ (2m-1-2k)= 3m-2-2k.$$
	Since $l_{k}\leq 2m-1$, it follows that $2k\geq m$. 
	
	Let $l'_i= l_i- (3m-2-2k)$ for all $1\leq i\leq k$. Note that
	$$l'_1+l'_2+\dotsb+l'_{k}+ l_{k+1}+l_{k+2}+\dotsb +l_n = m^2+n-3-k(3m-2-2k),$$
$l'_1\geq l'_2\geq \dotsb\geq l'_{k}\geq 1$, and
$l_{k+1}\geq l_{k+2}\geq \dotsb \geq l_n\geq 1$.
		The following is our key technical observation.
	$$m^2+n-3-k(3m-2-2k) \leq n+m-3 \qquad (\ast)$$

	Assume $(\ast)$ holds and we consider a burning process using $m$ rounds where the first burning source is placed at the head of $T$.
	Then $l_{k+1}+l_{k+2}+\dotsb +l_n \leq n+m-3-k$. It follows that $l_{k+1}\leq m-2$ and thus the last $n-k$ arms (provided, $k<n$) would be completely burned by the first source at the head in $m$ rounds.
		
	On the other hand,
$l'_1+l'_2+\dotsb+l'_{k}\leq n+m-3-(n-k)= m-3+k$.
	If $l'_{k}\geq 3$, then as $2k\geq m$, we have
$l'_1+l'_2+\dotsb+l'_{k}\geq 3k >m-3+k$, which is a contradiction. Hence, $l'_{k}\leq 2$.

By our choice of $k$ and a burning process similar to the one before,  it follows that the first $k+1$ burning sources would burn the entire $T$ in $m$ rounds, except possibly the last $l'_{k}$ vertices of the $k$-th arm. In the scenario when $m\geq n+3$, or when $m=n+2$ with $k\leq  n-1$, or when $m=n+1$ with $k\leq n-2$, there are at least two rounds left after the first $k+1$ rounds. Hence, placing the $(k+2)$-th burning source at the last vertex of the $k$-th arm suffices to complete the burning process since $l'_{k}\leq 2$.
	
	Therefore, there are three remaining special cases.

\setcounter{special case}{0}

\begin{special case}
	$m=n+2$ and $k=n$.
	
	 In this case,  $l'_1+l'_2+\dotsb+l'_n =n+1$ and thus $l'_n=1$. Hence, this sole vertex unburned by the first $n+1$ sources can be burned by the last remaining source.
\end{special case}

\begin{special case}
	$m=n+1$ and $k=n-1$.
	
In this case,   $l'_1+l'_2+\dotsb+l'_{n-1}+l_n =n+1$  and $n\geq 3$ (since $n-1=k\geq \frac{m}{2}\geq \frac{3}{2}$), implying that $l'_{n-1}=1$. Similarly, this sole vertex unburned by the first $n$ sources can be burned by the last remaining source.
\end{special case}

\begin{special case}
$m=n+1$ and $k=n$.

In this case,  $l'_1+l'_2+\dotsb+l'_n =2n-2$. Although $l'_n=1$, there is no more remaining burning source and thus a different burning process is necessary. Note that $l_n= l_n'+(3m-2-2k)=n+2$.
We consider a burning process of $T$ where the first burning source is placed at the first vertex on the last arm. This first source will burn the first $n+1$ vertices of the last arm and the first $n-1$ vertices of each of the first $n-1$ arms in $n+1$ rounds. Let $l''_i=l_i-(n-1)$ for each $1\leq i\leq n-1$ and $l''_n=1$. Then the vertices remain unburned by the first souce in $n+1$ rounds form a path forest $T''$ with path orders
$l''_1\geq l''_2\geq \dotsb \geq l''_n$ such that $l''_n=1$ and $l''_{n-1}\geq 3$. Since
$$\vert T''\vert = (n+1)^2+n-3-(n-1)(n-1)-(n+1)= 4n-4,$$
by Lemma~\ref{0405a}, $T''$ is $n$-burnable. Hence, $T$ is $(n+1)$-burnable by following the burning sequence of $T''$ from the second burning source onwards.
\end{special case}
	
	Therefore, it remains to prove $(\ast)$, which is equivalent to the following claim.
	
	\begin{claim}
		$k(3m-2-2k) \geq m^2-m$.
	\end{claim}
	
		\begin{proof}[Proof of the claim]
	Note that $k\leq n<m$ and recall that $2k\geq m$. Hence, we have  
	$\left\lfloor \frac{m+1}{2}\right\rfloor \leq    k\leq m-1$. By observing that $k(3m-2-2k)$ is a quadratic function in $k$, it is straightforward to check that the function is minimized at $k=m-1$ within the interval $\left[ \left\lfloor \frac{m+1}{2}\right\rfloor, m-1    \right]$.
	Hence, $k(3m-2-2k)\geq (m-1)[ 3m-2-2(m-1)]= m^2-m$.
		Therefore, the claim follows.\renewcommand{\qedsymbol}{}
\end{proof}
The proof is now complete.
\end{proof}

\begin{remark}\label{1604c}
From the proof of Lemma~\ref{0403a}, it can be seen that in the burning process of $T$ in $m$ rounds, the head of $T$ is burned in either the first round or the second round.	
\end{remark}	


\begin{lemma}\label{0603c}
Let $n\geq 1$ and suppose $T$ is a path forest with $n$ paths. If $m\geq n$ and $T$ has order at most $3m-1-n$, then $T$ is $m$-burnable.
\end{lemma}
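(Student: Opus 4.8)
The plan is to induct on the number $n$ of paths in $T$. For the base case $n=1$, the forest $T$ is a single path of order at most $3m-2$, and since $(m-1)(m-2)\geq 0$ we have $3m-2\leq m^2$; hence Theorem~\ref{2803a} gives $b(T)=\lceil\sqrt{|T|}\,\rceil\leq\lceil\sqrt{m^2}\,\rceil=m$.

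For the inductive step, suppose $n\geq 2$ and that the lemma holds for path forests with $n-1$ paths. Write $l_1\geq l_2\geq\dotsb\geq l_n$ for the path orders of $T$ and let $P$ be a path of $T$ of order $l_1$. If $l_1=1$ then $T$ is a set of $n$ isolated vertices and $b(T)=n\leq m$, so assume $l_1\geq 2$. The strategy is to spend one or two rounds burning $P$ with burning sources placed on $P$ itself, then invoke the induction hypothesis on the path forest $T'=T-P$ (which has $n-1$ paths), and finally prepend the source(s) used on $P$ to a burning sequence of $T'$. This is legitimate because $P$ is a separate component, so a source on $P$ never affects the burning of $T'$ and vice versa; the sequence of $T'$ is simply shifted by one or two rounds.

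I would then split into two cases according to the size of $l_1$. If $l_1\leq 2m-1$, then by Remark~\ref{1604a} a single source at the center of $P$ burns $P$ within $m$ rounds; here $|T'|=|T|-l_1\leq(3m-1-n)-2=3(m-1)-1-(n-1)$ and $m-1\geq n-1$, so the induction hypothesis makes $T'$ an $(m-1)$-burnable forest, and prepending the source on $P$ yields an $m$-burning of $T$. If instead $l_1\geq 2m$, then, since each of the other $n-1$ paths has order at least $1$, we get $2m\leq l_1\leq(3m-1-n)-(n-1)=3m-2n$, so $m\geq 2n$; in particular $l_1\leq 3m-2n\leq 4m-4=(2m-1)+(2m-3)$, so two sources placed consecutively along $P$ --- with $m-1$ and $m-2$ further rounds to spread, respectively --- burn $P$ within $m$ rounds. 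Now $|T'|=|T|-l_1\leq(3m-1-n)-2m=m-1-n\leq 3(m-2)-1-(n-1)$ (using $m\geq 3$) and $m-2\geq 2n-2\geq n-1$, so the induction hypothesis makes $T'$ an $(m-2)$-burnable forest, and prepending the two sources on $P$ completes an $m$-burning of $T$.

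The bulk of the work is just this bookkeeping with the inequalities, anchored by the observation $3m-2\leq m^2$ for $m\geq 1$. The one point that needs genuine care is the case $l_1\geq 2m$: one must notice that this size condition on a single path, combined with $|T|\leq 3m-1-n$, forces $m\geq 2n$, which is precisely what guarantees both that enough rounds remain for the recursion ($m-2\geq n-1$) and that the order bound $|T'|\leq 3(m-2)-1-(n-1)$ required to apply the induction hypothesis actually holds.
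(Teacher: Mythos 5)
Your proof is correct and follows essentially the same route as the paper's: induction on $n$, deleting a longest path and splitting on whether its order is $1$, at most $2m-1$ (one source, recurse with $m-1$), or at least $2m$ (derive $m\geq 2n$, use two sources since $(2m-1)+(2m-3)=4m-4$ exceeds $l_1$, recurse with $m-2$). The bookkeeping inequalities you check are the same ones the paper checks.
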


\begin{proof}
We argue by induction on $n$. The base step $n=1$ is straightforward. 
For the induction step, suppose $n>1$ and the lemma is true for $n-1$.
If the longest path of $T$ has order one, then since $n\leq m$, it is trivially $m$-burnable.
Else if the longest path of $T$ has order between $2$ and $2m-1$, then by deleting a/this longest path, we get 
a path forest $T'$ with $n-1$ paths and order at most $3m-1-n-2=3(m-1)-1 -(n-1)$; hence, by the induction hypothesis, $T'$ is $(m-1)$-burnable and thus $T$ is $m$-burnable. Otherwise, the longest path of $T$ has order more than $2m-1$. Again, by deleting this longest path, we get a path forest $T''$ with $n-1$ paths and order at most $3m-1-n-2m=m-1-n$. 
Note that $m-1-n\geq n-1$, implying that $m\geq 2n \geq n+2$. Also, note that
$(2m-3)+(2m-1)=4m-4$ is larger than the order of the longest path. Since $m-2 \geq n-1$ and (because $m\geq 4$)
$$\vert T''\vert \leq m-1-n\leq 3(m-2)-1-(n-1),$$  by the induction hypothesis,
$T''$ is $(m-2)$-burnable and thus $T$ is $m$-burnable.
\end{proof}

\begin{theorem}\label{0403b}
	Let $n\geq 2$. Every $n$-spider of order at most $n^2+3n-1$ is \linebreak $(n+1)$-burnable. Furthermore, if $l$ is the length of the shortest arm, then the spider can be burned in $n+1$ rounds in such a way that after the head of the spider is burned, there are still at least $\min\{l,n-1\}$ rounds.	
\end{theorem}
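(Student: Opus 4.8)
The plan is to induct on $n$, mirroring the structure already established in Lemma~\ref{0403a} but now keeping track of the extra ``slack'' rounds promised in the furthermore clause. Fix an $n$-spider $T$ of order at most $n^2+3n-1$ with arm lengths $l_1\geq l_2\geq\dotsb\geq l_n$, and let $l=l_n$ be the shortest arm length. The base case $n=2$ is a path (or near-path) of order at most $9$, so it is $3$-burnable by Theorem~\ref{2803a}, and one checks directly that the first source can be placed so that the appropriate number of rounds remains; this is routine. For the induction step, I would first dispose of the case where all arms are short, specifically $l_i\leq n+(n-2i)$ for every $i$: then placing the first source at the head burns the first $n$ vertices of every arm in $n+1$ rounds, leaving on the $i$-th arm a path of order at most $n-2i$, which the $(i+1)$-th source handles by Remark~\ref{1604a}; here the head is burned in round $1$, giving $n$ remaining rounds, more than enough.

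In the main case, let $k$ be least with $l_k > n+(n-2k)=2n-2k$. As in Lemma~\ref{0403a}, since $l_k$ is bounded (now by roughly $2n+1$, using $|T|\leq n^2+3n-1$ and that $T$ contains no path longer than $2(n+1)-1$), we get a lower bound on $k$, and then setting $l_i'=l_i-(2n-2k)$ for $i\leq k$ the total residual length $\sum_{i\leq k} l_i' + \sum_{i>k} l_i$ is at most something like $2n-2+k$. The key inequality to verify — the analogue of $(\ast)$ — is that $k(2n-2k) \geq$ (order) $-(2n-2+k)$, i.e. a quadratic bound on $k(2n-2k)$ of the form $n^2+n$ or thereabouts; as in the claim in Lemma~\ref{0403a}, the quadratic $k(2n-2k)$ is minimized at the right endpoint $k=n-1$ of the relevant interval, where it equals $(n-1)\cdot 2 = 2n-2$, so I expect the inequality to come down to checking $n^2+3n-1-(2n-2+k)\leq k(2n-2k)$ at $k=n-1$ and at $k=n$ separately. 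Once this holds, the first $k+1$ sources burn everything except at most the last $l_k'\leq 2$ vertices of the $k$-th arm (the argument that $l_k'\leq 2$ again uses the lower bound on $k$ together with $l_1'\geq\dotsb\geq l_k'$), and a final source mops those up, provided two rounds remain — which forces a short list of boundary special cases to be handled by hand, exactly as in Lemma~\ref{0403a}, very possibly invoking Lemma~\ref{0405a} or Lemma~\ref{0603c} for the leftover path forest.

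The furthermore clause requires threading the slack count through all of this. When the head is burned in round $1$ (the all-short case and most of the main case), there are $n$ rounds left, and $\min\{l,n-1\}\leq n-1 < n$, so nothing to prove. The delicate point is the special cases where, as in Special Case~3 of Lemma~\ref{0403a}, one is forced to place the first source not at the head but at the first vertex of the shortest arm, so the head burns in round $2$ and only $n-1$ rounds remain; then I must check $\min\{l,n-1\}\leq n-1$, which is automatic — so actually the bound $\min\{l,n-1\}$ is engineered precisely so that ``head burned in round $1$ or $2$'' always suffices. Thus the furthermore clause should follow once I confirm, case by case, that the head is burned by round $2$ in every sub-argument, paralleling Remark~\ref{1604c}.

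I expect the main obstacle to be the bookkeeping in the special boundary cases: pinning down exactly which small values of $n$ and $k$ fail to leave two spare rounds, and for each producing an explicit alternative burning sequence (starting from the shortest arm and reducing to a path-forest lemma) while simultaneously verifying the head burns by round $2$. The quadratic inequality $(\ast)$-analogue is mechanical, and the all-short case is immediate; it is the interface between the induction, the leftover path forests, and the slack-counting that needs care.
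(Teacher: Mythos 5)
Your proposal has a genuine gap: it only addresses spiders whose arms are all short, which is precisely the case the paper delegates to Lemma~\ref{0403a}, and it never handles a long arm. You write that the arm lengths are bounded ``by roughly $2n+1$, using $|T|\leq n^2+3n-1$ and that $T$ contains no path longer than $2(n+1)-1$,'' but that premise is false: an $n$-spider of order $n^2+3n-1$ can have an arm of length up to $n^2+2n-1$ (e.g.\ arm lengths $n^2+2n-1,1,\dotsc,1$), and the statement places no restriction on arm lengths. The bound $l_i\leq 2m-1$ is a \emph{hypothesis} of Lemma~\ref{0403a}, not a consequence of the order bound. Your ``main case'' (least $k$ with $l_k>2n-2k$, the residual lengths $l_i'$, the quadratic inequality analogous to $(\ast)$, and the conclusion $l_k'\leq 2$) is essentially a re-derivation of Lemma~\ref{0403a} at $m=n+1$; moreover that derivation collapses without the cap on $l_k$, since the lower bound $2k\geq m$ (hence the useful range for the quadratic) comes from $l_k\leq 2m-1$. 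With $l_1$ unbounded, $k$ can be $1$ and $l_1'$ enormous, so the inequality and the $l_k'\leq 2$ conclusion both fail.

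The content of Theorem~\ref{0403b} beyond Lemma~\ref{0403a} is exactly the case $l_1>2n+1$, and this is where the induction on $n$ and the ``furthermore'' clause do real work, not merely bookkeeping to be verified. The paper splits on the shortest arm: if $l_n\leq n-2$, it deletes an arm (the shortest, or the excess of the longest) to obtain an $(n-1)$-spider of order at most $(n-1)^2+3(n-1)-1$, and then \emph{uses} the inductive guarantee of $\min\{l,n-2\}$ spare rounds after the head burns to ensure the fire spreading from the head finishes the deleted short arm (or the stub $l_1'$ of the long arm); the first source is spent on the last $2n+1$ vertices of the long arm. If instead $l_n\geq n-1$, the first source goes at the head, the second attacks the long first arm, and the leftover path forest is dispatched by Lemma~\ref{0603c}. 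In your write-up the furthermore clause appears only as a condition to be checked (``head burned by round 2''), whereas in the actual argument it is the inductive hypothesis that makes arm-deletion legitimate. To repair your proof you would need to add this long-arm reduction (or an equivalent), at which point the short-arm case should simply be a citation of Lemma~\ref{0403a} together with Remark~\ref{1604c}.
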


\begin{proof}
We argue by induction $n$. For the base step $n=2$, it  is straightforward to check that every $2$-spider  of order at most nine can be burned in three rounds in such a way that its head is not burned last.
For the induction step, 
	suppose $n>2$ and  the theorem is true for $n-1$.

 Suppose $T$ is an $n$-spider with arm lengths $l_1\geq l_2\geq \dotsb \geq l_n$ such that
$\vert T\vert \leq n^2+3n-1$.
	If $l_1\leq 2n+1$, then we are done by Lemma~\ref{0403a} and Remark~\ref{1604c}. Hence, assume $l_1>2n+1$. Let $l_1'=l_1-(2n+1)$.

\setcounter{case}{0}

\begin{case}
$l_n \leq n-2$.

\begin{subcase}
$l_1'\geq l_n$.

Let $T'$ be the $(n-1)$-subspider of $T$ with arm lengths
$l'_1, l_2, \dotsc, l_{n-1}$.
Since 
$$\vert T'\vert \leq  n^2+3n-1 -(2n+1)-1= (n-1)^2+3(n-1)-1,$$
by the induction hypothesis, $T'$ can be burned in $n$ rounds in such a way that there are still at least 
$\min\{ l_1', l_{n-1}, n-2  \} $ rounds left after the head of the spider is burned. To see that $T$ is $(n+1)$-burnable, we burn the last $2n+1$ vertices on the first arm of $T$ by the first burning source. Since $\min\{ l_1', l_{n-1}, n-2  \}\geq l_n $, the remaining vertices of $T$ can be burned according to the burning sequence of $T'$. 
\end{subcase}

\begin{subcase}
$l_1'< l_n$.

Let $T''$ be obtained by deleting the longest arm from $T$. Since
$$\vert T''\vert \leq n^2+3n-1 -(2n+2)= (n-1)^2+3(n-1)-1,$$ by the induction hypothesis, $T''$ 
can be burned in $n$ rounds in such a way that there are still at least 
$\min\{ l_n, n-2  \} $ rounds left after the head of the spider is burned.
To see that $T$ is $(n+1)$-burnable, again we burn the last $2n+1$ vertices on the first arm of $T$ by the first burning source. Since $\min\{ l_n,  n-2  \}=l_n> l_1' $, the remaining vertices of $T$ can be burned according to the burning sequence of $T''$.
\end{subcase}
\end{case}

\begin{case}
	$l_n\geq n-1$.
	
We consider a burning process with $n+1$ rounds where the first burning source is placed at the head of $T$. 

\begin{subcase}
$l_1'\leq n-2$.

This implies that $l_1\leq (n-2)+(2n+1)= n+(2n-1)$.
Hence, using the second source together with the first source placed at the head of $T$, the first arm can be completely burned. Let $k$ be the number of the \emph{other} arms with length more than $n$. (If $k=0$, then $T$ would be fully burned at the end of the burning process by the first two burning sources.) Assume $k$ is positive. Then  the vertices remain unburned by the first two sources form a path forest with $k$ paths and order at most
$$n^2+3n-2-kn -(n-k-1)(n-1)-(2n+2)=3n-5-k< 3(n-1)-1-k.$$
 Hence, by Lemma~\ref{0603c}, this path forest is $(n-1)$-burnable and therefore $T$ is  $(n+1)$-burnable.
\end{subcase}

\begin{subcase}
$l_1'\geq n-1$.

This implies that $l_1\geq 3n$.
Let $k$ be the number of the other arms with length more than $n$. Note that $k\neq n-1$ or else $l_1\leq n^2+3n-2-(n-1)(n+1)=3n-1$, a contradiction. 
Using the second source together with the first source, the first $3n-1$ vertices of the first arm would be burned at the end of the burning process. Then the vertices remain unburned by the first two sources form a path forest with $k+1$ paths (possibly $k=0$) and order at most
$$n^2+3n-2-kn -(n-k-1)(n-1)-(3n-1)=2n-2-k.$$
Since $n\geq 3$, it follows that
$$2n-2-k\leq 3n-5-k=3(n-1)-1-(k+1).$$
Therefore, noting that $k+1\leq n-1$,  this path forest is $(n-1)$-burnable 
by Lemma~\ref{0603c} and thus $T$ is $(n+1)$-burnable.
\end{subcase}
Therefore, the above cases complete the proof of the lemma.\qedhere
\end{case}
\end{proof}

\begin{theorem}\label{1103c}
	Let $m>n\geq 2$. Every $n$-spider of order at most $ m^2+n-2$ is $m$-burnable. 
\end{theorem}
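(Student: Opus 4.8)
The plan is to prove Theorem~\ref{1103c} by induction on $n$, using Lemma~\ref{0403a} to dispose of the case when all arms are short and Theorem~\ref{0403b} (together with the technical path-forest Lemma~\ref{0603c}) to handle the case when one arm is long. Fix $m > n \geq 2$ and an $n$-spider $T$ with arm lengths $l_1 \geq l_2 \geq \dotsb \geq l_n$ and $\vert T\vert \leq m^2+n-2$; we may assume $\vert T\vert = m^2+n-2$, so $l_1 + \dotsb + l_n = m^2+n-3$. If every arm has length at most $2m-1$, then Lemma~\ref{0403a} applies directly and we are done. So the crux is the case $l_1 > 2m-1$.

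When $l_1$ is large, the natural move is to devote the first burning source to (the center of) a long segment at the far end of the first arm — by Remark~\ref{1604a} a single source burns a path of order $2m-1$ in $m$ rounds — and then reduce to a smaller spider. Concretely, I would peel off $2m-1$ vertices from the end of the first arm, setting $l_1' = l_1 - (2m-1)$, and consider the $(n-1)$-subspider $T'$ obtained either by shortening the first arm to length $l_1'$ (if $l_1'$ is not too small relative to the other arms) or by deleting the first arm entirely (if $l_1' < l_n$). The order bookkeeping should give $\vert T'\vert \leq (m-1)^2 + (n-1) - 2$ in the first sub-case (since we remove $2m-1$ vertices and $m^2 + n - 2 - (2m-1) = (m-1)^2 + (n-1) - 2$), so that the induction hypothesis for $(n-1, m-1)$ — valid as long as $m-1 > n-1$, i.e. our standing hypothesis $m>n$ — lets us burn $T'$ in $m-1$ rounds; prepending the first source burning the peeled-off path then burns $T$ in $m$ rounds. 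The second sub-case, deleting the longest arm, removes $l_1 + 1 \geq 2m+1$ vertices, which is even more favorable. The delicate point is matching the \emph{timing}: we need the burning sequence of $T'$ to be compatible with the first source, i.e. the head of $T$ should not be scheduled to burn too late, which is exactly the kind of guarantee that Remark~\ref{1604c} and the "at least $\min\{l,n-1\}$ rounds after the head burns" clause of Theorem~\ref{0403b} are designed to supply.

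The boundary of the induction, where $m - 1 = n - 1$ is false but $m$ is only slightly larger than $n$ — say $m = n+1$ — is where Theorem~\ref{0403b} enters as the genuine base case: it asserts precisely that every $n$-spider of order at most $n^2 + 3n - 1 = (n+1)^2 + n - 2$ is $(n+1)$-burnable, with the extra timing guarantee. So I would structure the argument so that the induction on $n$ bottoms out by invoking Theorem~\ref{0403b} when $m = n+1$, and for $m \geq n+2$ uses the peeling reduction above, checking in each sub-case that the parameters land in the regime covered by either the inductive hypothesis, Lemma~\ref{0403a}, or Lemma~\ref{0603c}.

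The main obstacle I anticipate is not the order arithmetic — that is routine, as the numbers $m^2 + n - 2$ telescope cleanly under the operation "remove a length-$(2m-1)$ arm and decrement both $m$ and $n$" — but rather the scheduling/timing analysis when $l_1'$ is of intermediate size (neither small enough to be absorbed by a source at the head, nor large enough to peel another full $2m-1$ block): one must verify that after assigning the first source to the long arm, the \emph{remaining} vertices on that arm, plus whatever the head-source fails to reach on the other arms, can be packed into the leftover sources with the correct radii. This is the same difficulty that forced the three "special cases" in the proof of Lemma~\ref{0403a} and Case~2 of Theorem~\ref{0403b}, and I would expect to reuse Lemma~\ref{0603c} (the bound $3m-1-n$ on burnable path forests with $n$ paths) as the workhorse for exactly these leftover path forests.
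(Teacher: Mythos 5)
Your high-level plan --- dispose of the all-arms-short case by Lemma~\ref{0403a}, peel a path of order $2m-1$ off the end of the long arm for the first source (Remark~\ref{1604a}), and use Theorem~\ref{0403b} as the base case at $m=n+1$ --- is exactly the paper's strategy. But the reduction as you specify it has a genuine gap. You reduce to an \emph{$(n-1)$-subspider} and invoke the statement for $(n-1,m-1)$, yet shortening the first arm to length $l_1'=l_1-(2m-1)$ leaves an $n$-spider, not an $(n-1)$-spider; and your bookkeeping $m^2+n-2-(2m-1)=(m-1)^2+(n-1)-2$ is off by one: the correct value is $(m-1)^2+n-2$, which exceeds the bound required by the $(n-1,m-1)$ hypothesis. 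So the numbers do \emph{not} ``telescope cleanly under the operation remove a length-$(2m-1)$ arm and decrement both $m$ and $n$''; they telescope only if you decrement $m$ alone. Your alternative sub-case, deleting the first arm entirely, is worse: the single first source can burn at most $2m-1$ vertices in $m$ rounds, so it cannot cover an arm of length $l_1>2m-1$ by itself, and in this theorem (unlike Case~1 of Theorem~\ref{0403b}, where $l_1'<l_n\le n-2$) the residue $l_1'$ can be enormous, so no head-source can absorb it.

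The repair is the paper's: fix $n$ and induct on $m$ only. For $m>n+1$, let $T'$ be the \emph{$n$-spider} with arm lengths $l_1',l_2,\dotsc,l_n$; then $\vert T'\vert\le (m-1)^2+n-2$ exactly, and the $(n,m-1)$ induction hypothesis applies because $m-1>n$. The base case $m=n+1$ is Theorem~\ref{0403b}, since $n^2+3n-1=(n+1)^2+n-2$. This also dissolves the timing worries you raise: the peeled segment of $2m-1$ vertices is vertex-disjoint from $T'$, so the $(m-1)$-round burning sequence of $T'$, delayed by one round, together with the first source placed at the center of the peeled segment, burns $T$ in $m$ rounds with no scheduling interaction at the head. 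The guarantees of Remark~\ref{1604c} and the ``rounds left after the head burns'' clause are needed elsewhere (in Theorem~\ref{0403b} and Theorem~\ref{2702b}), not here.
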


\begin{proof}
	Fix an arbitrary $n\geq 2$.	We argue by induction on $m$. The base step $m=n+1$ holds by Theorem~\ref{0403b}. Now, suppose $m>n+1$ and the theorem holds for $m-1$. Suppose $T$ is an $n$-spider of order at most $m^2+n-2$ with arm lengths $l_1\geq l_2\geq \dotsb \geq l_n$.
	If $l_1\leq 2m-1$, we are done by Lemma~\ref{0403a}. Hence, assume $l_1>2m-1$. Let $l_1'=l_1-(2m-1)$ and let $T'$ be the $n$-spider with arm lengths $l'_1, l_2, \dotsc, l_n$. Since $\vert T'\vert \leq m^2+n-2-(2m-1)= (m-1)^2+n-2$,
	by the induction hypothesis, $T'$ is $(m-1)$-burnable.
	Now, by placing the first burning source at the center of the last $2m-1$ vertices of the first arm of $T$, and then follow the burning sequence of $T'$, we deduce that $T$ is $m$-burnable.	
\end{proof}

\begin{lemma}\label{1804a}
Let $m\geq 2$. The balanced $m$-spider of order $m^2+1$ is not $m$-burnable.
\end{lemma}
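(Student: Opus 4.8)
The plan is to show that the balanced $m$-spider $S$ of order $m^2+1$ cannot be covered by balls of radii $0,1,\dotsc,m-1$. Note that $S$ has $m$ arms, each of length $m$ (since $m\cdot m+1=m^2+1$), so the longest path in $S$ has order $2m+1$. First I would recall the standard reformulation: $S$ is $m$-burnable if and only if there exist centers $v_0,v_1,\dotsc,v_{m-1}$ such that the closed balls $B(v_i, i)$ cover $V(S)$. A convenient normalization is that we may take each $v_i$ to lie on some arm (or be the head) and, by a standard shifting argument, assume each ball is used "efficiently" — pushed outward along its arm so that it covers a terminal segment of that arm or reaches the head.

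The core counting idea is a weight/volume argument refined by the geometry of the spider. A ball of radius $i$ centered at the head covers the head plus the first $i$ vertices of every arm, so $ni+1$ vertices; a ball of radius $i$ centered strictly inside the spider (not at the head) covers at most $2i+1$ vertices, and if it is to help cover the far end of an arm it covers at most $i+1$ new vertices of that arm beyond what a head-ball of the same radius already reaches. The key obstruction is that only one ball — effectively the one that must cover the head, necessarily of radius at least large enough — can exploit the branching, and the remaining $m-1$ balls behave as if burning a path forest. More precisely, after the ball covering the head (say of radius $r$) is accounted for, the uncovered portion of each arm is a path of length $m-r$, so we are left needing to cover a path forest with $m$ paths each of order $m-r$ using balls of radii from $\{0,1,\dotsc,m-1\}\setminus\{r\}$. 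I would then set up the inequality: total vertices to cover after the head-ball is $m(m-r)$, the available radii have $\sum_{i\neq r} (2i+1) = m^2 - (2r+1)$ as their total covering capacity (each path-ball covers $\le 2i+1$), and compare. Since $m(m-r) = m^2 - mr$ and we need $m^2 - mr \le m^2 - (2r+1)$, i.e. $mr \ge 2r+1$, this is not by itself a contradiction for large $r$ — so the naive volume bound is insufficient, and the real argument must use that the $m$ arms are separate paths, forcing each ball to lie within a single arm and hence cover at most $\min(2i+1, \text{arm remainder})$ on one arm only.

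The sharper argument I would run: suppose for contradiction $S$ is $m$-burnable. Consider the arm(s) not containing the center of the largest ball, or more carefully, use a parity/indexing argument in the spirit of Theorem~\ref{2803a}. For a single path of order $p$, $b(P_p)=\lceil\sqrt p\rceil$ because $m$ balls of radii $0,\dots,m-1$ cover at most $1+3+\dots+(2m-1)=m^2$ path-vertices and they must tile without overlap on a path; the same extremal rigidity says that to cover $m^2$ path-vertices with these $m$ balls, the cover must be a perfect partition into segments of sizes $1,3,\dots,2m-1$. Now in $S$, at most one ball can be centered at the head; every other ball is confined to a single arm. I would count, for each arm, the balls whose centers lie on that arm (including possibly the head-ball which contributes to all arms). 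Summing the "capacities" arm by arm and using that the head is shared gives that the total coverage is at most $\sum_{i=0}^{m-1}(2i+1) + (n-1)\cdot(\text{extra from head ball}) $ but the head ball's extra contribution to the other $n-1$ arms is exactly its radius on each, which is capped by the arm length $m$. Balancing these inequalities against $|S| = nm+1$ with $n=m$ yields the contradiction; the slack is tight, matching the fact that $m^2$ (one less) is burnable by Theorem~\ref{2703a}.

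The main obstacle I expect is handling the head-ball carefully: it is the one ball that genuinely benefits from the branching, so the argument cannot be a pure volume count and must track that no matter what radius $r$ the head-ball has, the leftover is a path forest of $m$ equal paths of order $m-r$ that then fails a path-forest burning bound — and for this one would want a clean lower bound on the burning number of such a balanced path forest, presumably $>m-1$ given the available radii, which is exactly where the equal-arm-length hypothesis (balancedness) is used. I would likely prove the needed path-forest statement as a small separate claim: $m$ equal paths each of order $q$ cannot be covered by any $m-1$ of the balls of radii $0,\dots,m-1$ when $mq$ exceeds the relevant capacity after removing the radius $r$ used on the head; alternatively, and perhaps more cleanly, argue directly that some arm receives "insufficient attention" by pigeonhole on how the $m$ balls distribute among $m$ arms plus the head. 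I would choose whichever of these two routes gives the crispest inequality.
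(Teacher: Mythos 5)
There is a genuine gap: what you have written is a survey of candidate strategies, not a proof. You correctly diagnose that a pure volume count fails and that the branching at the head is the crux, but you never close the argument. Two specific problems. First, your main reduction assumes that the ball covering the head is centered at the head, so that the leftover is a balanced path forest of $m$ paths of order $m-r$; this is not justified. The head-covering ball may be centered at distance $d>0$ from the head on some arm, in which case it covers $r+d$ vertices of that arm but only $r-d$ of each other arm, and the leftover is no longer balanced. (Your ``shifting'' normalization does not repair this: pushing a ball outward along an arm uncovers vertices of the other arms near the head.) Second, neither of your two proposed routes is executed: the ``clean lower bound on the burning number of such a balanced path forest'' is named but not proved, the capacity inequality $\sum_{i\neq r}\min(2i+1,\,m-r)<m(m-r)$ is never written down or verified, and you end by saying you would choose ``whichever route gives the crispest inequality.'' A proof has to make that choice and carry it out.

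For comparison, the paper's argument is a short pigeonhole that avoids all of this. Any two leaves of the balanced $m$-spider are at distance $2m$, so a ball of radius at most $m-1$ that covers one leaf is at distance at least $m+1$ from every other leaf and therefore misses the last \emph{two} vertices of every other arm. Hence each of the $m$ sources can burn at most one leaf; since there are $m$ leaves, either some leaf is left to the radius-$0$ source alone (which then cannot also cover the adjacent second-to-last vertex of that arm, as every other source burns a different arm's leaf and so misses both of the last two vertices of this one), or at least two leaves remain for the radius-$0$ source, which is impossible. This is the concrete quantitative fact (leaf-to-leaf distance $2m$ versus radius at most $m-1$ leaving a gap of two vertices) that your pigeonhole sketch gestures at but never identifies; with it, no case analysis over the radius or position of the head-covering ball is needed.
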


\begin{proof}
The case $m=2$ is trivial.
Suppose $m\geq 3$. We argue by contradiction. Assume the balanced $m$-spider $T$ with each arm length being $m$ is $m$-burnable. 
Note that any two arms together with the head of the spider forms a path of order $2m+1$. Hence, in a burning process of $m$ rounds, if any burning source placed at this path burns one of the end vertices, then at least two vertices at the other end of the path would not be burned by this source. It follows that in $m$ rounds, assuming $m-1$ leaves are burned by the first $m-1$ burning sources, the last two vertices of the arm with the remaining leaf would not be burned. However, it would be impossible to complete the burning process using the last remaining burning source.
\end{proof}

\begin{theorem}\label{2702b}
	Let $n\geq m \geq  2$. If $T$ is an $n$-spider of order at most $m^2+n-2$, then $T$ is $m$-burnable unless 
	$T$ contains the balanced $m$-spider of order $m^2+1$ as a subtree (when $n \geq 3$).	
\end{theorem}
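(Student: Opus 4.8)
The plan is to argue by induction on $m$, mirroring the structure used to prove Theorem~\ref{1103c} but now allowing $m \leq n$. The base case $m = 2$ is essentially trivial: a $2$-spider of order at most $n$ is a path of order at most $n$, and since $n \geq m = 2$ we have $n \geq 4$ is not guaranteed — actually for $m=2$ the bound is $m^2 + n - 2 = n + 2$, so we must check that every $n$-spider of order at most $n+2$ is $2$-burnable unless it contains the balanced $2$-spider (a path of order $5$) as a subtree; this is a short finite check since a $2$-burnable graph has at most $3$ vertices when it is a path, and an $n$-spider with $n \geq 3$ of order $\leq n+2$ has all but at most two arms of length one, so after deleting length-one arms (using the Remark after the First Main Theorem) we reduce to a small case. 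For the induction step, suppose $m \geq 3$ and the result holds for $m-1$ (with the same $n$, noting $n \geq m > m-1$, so if $m - 1 \geq 2$ the inductive hypothesis applies, and if $m - 1 = 1$ we are in the base case). Let $T$ be an $n$-spider of order at most $m^2 + n - 2$ with arm lengths $l_1 \geq l_2 \geq \dotsb \geq l_n$, and assume $T$ does not contain the balanced $m$-spider of order $m^2+1$ as a subtree; we must show $T$ is $m$-burnable.

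First I would dispose of the case $l_1 \leq 2m - 1$: here all arm lengths are at most $2m-1$, so if $|T| = m^2 + n - 2$ exactly we would like to invoke Lemma~\ref{0403a}, but that lemma requires $m > n$, which now fails. So this is precisely where the new argument is needed. The idea is that when $m \leq n$, the arm lengths cannot all be close to $2m-1$ while the total order stays at $m^2 + n - 2$; in fact the average arm length is roughly $(m^2 + n)/n = m^2/n + 1$, which is at most $m+1$ when $n \geq m$. So most arms are short. I would set up a burning process placing the first source at the head, which burns the first $m-1$ vertices of every arm in $m$ rounds, leaving a path forest whose $i$-th path has order $l_i - (m-1)$ (when positive). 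One then counts how many arms have length $> m - 1$, bounds the total residual order, and applies Lemma~\ref{0603c} (the $3m-1-n$ bound for path forests with few paths) to burn the residual forest in $m - 1$ further rounds — except this gives $m + (m-1)$ rounds, too many; instead one uses the subtler version where residual paths are burned by later sources concurrently, exactly as in the proof of Lemma~\ref{0403a}. The obstruction case — where this counting fails — should force the spider to have $m$ arms of length exactly $m$, i.e. to contain the balanced $m$-spider, which is the excluded configuration; Lemma~\ref{1804a} confirms that such a spider of order $m^2+1$ is genuinely not $m$-burnable, so the exception is necessary and not an artifact.

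For the case $l_1 > 2m - 1$, I would peel off the last $2m-1$ vertices of the longest arm, burning them with one source placed at their center (Remark~\ref{1604a}), and apply the inductive hypothesis to the resulting $n$-spider $T'$ with arm lengths $l_1 - (2m-1), l_2, \dotsc, l_n$, which has order at most $(m-1)^2 + n - 2$. The wrinkle is the exclusion clause: $T'$ might contain the balanced $(m-1)$-spider of order $(m-1)^2 + 1$ as a subtree even though $T$ does not contain the balanced $m$-spider. I would handle this by showing that if $T'$ contains the balanced $(m-1)$-spider, then — because we only shortened one arm and did so by an amount that still leaves $l_1 - (2m-1) \geq 1$ — the arms $l_2, \dotsc, l_n$ contributing to that balanced subspider all have length $\geq m-1$, and combined with the long first arm $l_1 \geq 2m$ one can exhibit either a balanced $m$-spider inside $T$ (contradiction) or directly construct an $m$-burning of $T$ by hand, using the long arm's extra length to absorb the deficit. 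Alternatively, and more cleanly, one chooses which arm to shorten more carefully, or shortens by $2m-1$ a different arm, to avoid creating the forbidden subspider in $T'$ — a case analysis on how many arms have length $\geq m-1$.

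The main obstacle I expect is the bookkeeping in the $l_1 \leq 2m-1$ case when $m$ is close to $n$: pinning down exactly when the counting argument via Lemma~\ref{0603c} fails, and verifying that failure happens only for the balanced $m$-spider of order $m^2+1$ and not, say, for some near-balanced spider of the same order that is actually burnable. This requires a tight inequality of the same flavor as the claim $k(3m-2-2k) \geq m^2 - m$ in Lemma~\ref{0403a}, and getting the constants exactly right — so that the excluded set is a single tree rather than a family — is the delicate point. A secondary obstacle is ensuring the inductive exclusion clause propagates correctly through the arm-peeling step, which I would resolve by the careful choice of which arm to shorten as sketched above.
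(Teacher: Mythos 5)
Your proposal does not follow the paper's route, and as written it has two genuine gaps. The paper's proof is short and direct: from the hypothesis that $T$ does not contain the balanced $m$-spider it extracts the single fact $l_m\leq m-1$, then splits on whether $l_m\leq m-2$ (apply Theorem~\ref{0403b} to the $(m-1)$-subspider formed by the first $m-1$ arms, whose order is at most $(m-1)^2+3(m-1)-1$, and let the fire from the head finish the short arms) or $l_m=m-1$ (place the first source at the head; since $m$ arms each absorb $m-1$ burned vertices and the rest are fully burned, the residual path forest has order at most $2m-3$ and at most $m-1$ paths, so Lemma~\ref{0603c} applies). You never derive or use $l_m\leq m-1$; instead you hope that "the obstruction case should force the spider to have $m$ arms of length exactly $m$," which is the wrong direction and is not established.

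Concretely: (i) In your case $l_1\leq 2m-1$, placing the first source at the head leaves a path forest with $k$ paths, where $k$ is the number of arms of length at least $m$, of total order up to $m^2-3-k(m-2)$. For $k=m-1$ this is $3m-5$, which exceeds both the Lemma~\ref{0603c} threshold $3(m-1)-1-k=2m-3$ and the Theorem~\ref{2003b} threshold $(m-1)^2-(m-2)^2=2m-3$ for all $m\geq 3$; for $k=1$ and $k=m-1$ the needed inequality $k^2+2m-3\leq km$ fails whenever $m\geq 3$. So neither path-forest result closes the count, and the "subtler concurrent" argument of Lemma~\ref{0403a} cannot be imported wholesale: its key inequality $k(3m-2-2k)\geq m^2-m$ and its three terminal special cases are derived under $m>n$ and would have to be re-established from scratch in the regime $m\leq n$. (ii) In your peeling step $l_1>2m-1$, the inductive exception is real and unhandled: for $m=n=4$ and arm lengths $8,3,3,3$ (order $18=m^2+n-2$), the peeled spider $T'$ has arm lengths $1,3,3,3$ and contains the balanced $3$-spider of order $10$, while $T$ does not contain the balanced $4$-spider; your suggested remedies ("construct a burning by hand" or "shorten a different arm") are not arguments covering all such configurations. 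Both gaps disappear if you restructure the proof around $l_m\leq m-1$ as the paper does.
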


\begin{proof}
The case $n=2$ is straightforward. Let $n\geq 3$ and $2\leq m\leq n$. Suppose $T$ is an $n$-spider of order at most $m^2+n-2$ with arm lengths $l_1\geq l_2\geq \dotsb \geq l_n$. Assume $T$ does not contain the balanced $m$-spider of order $m^2+1$ as a subtree. This implies that $l_m\leq m-1$. 	

\setcounter{case}{0}

\begin{case}
	$l_m\leq m-2$.
	
	Let $T'$ be the $(m-1)$-subspider of $T$ consisting of the first $m-1$ arms of $T$ together with the head. Note that $m-1\geq 2$ in this case and
	$$\vert T'\vert \leq m^2+n-2 -(n-m+1)= m^2+m-3= (m-1)^2+3(m-1)-1.$$
Hence, by Theorem~\ref{0403b}, $T'$ can be burned in $m$ rounds in such a way that after the head of $T'$ is burned, there are still at least $\min\{l_{m-1},m-2\}$ rounds left. Since $\min\{l_{m-1},m-2\}\geq l_m \geq \dotsb \geq l_n$, it follows that $T$ is $m$-burnable. 
	\end{case}

\begin{case}
$l_m= m-1$.

We consider a burning process with $m$ rounds where the first burning source is placed at the head of $T$. Then the first $m-1$ vertices of each of the first $m$ arms and the entire last $n-m$ arms would be burned at the end of the process by the first source. Hence, the number of  vertices unburned by the first source is at most
$$m^2+n-3- m(m-1)- (n-m) = 2m-3$$
and they form a path forest $T''$ with $k$ paths, where $1\leq k\leq m-1$. Since
$$2m-3 \leq 2m-3 + (m-1-k)=3(m-1)-1-k,$$  
it follows by Lemma~\ref{0603c} that $T''$ is $(m-1)$-burnable and thus $T$ is $m$-burnable.
\end{case}
Therefore, the proof is complete.
\end{proof}

\begin{proof}[Proof of First Main Theorem]
Fix an arbitrary integer $n\geq 2$.
Suppose $m>n$. The \mbox{$n$-spider} with arm lengths $m^2-1, \underbrace{1,1, \dots, 1}_{n-1 \text{ times}}$ has a path of length $m^2+1$ as a subtree. Hence, it is not $m$-burnable. Therefore, by Theorem~\ref{1103c}, $I^{sp}_{n,m}= m^2+n-2$.

Now, suppose $2\leq m\leq n$. Note that the unique $n$-spider with arm lengths $\underbrace{m,m, \dots, m}_{m \text{ times}}, \underbrace{1,1, \dots, 1}_{n-m \text{ times}}$ of order $m^2+n-m+1$ is the smallest (by order) $n$-spider containing the balanced $m$-spider of order $m^2+1$ as a subtree. Therefore, by Lemma~\ref{1804a} and Theorem~\ref{2702b}, 
$I^{sp}_{n,m}= m^2+n-m$.
\end{proof}

\section{Main Result on Path Forests and Its Application} 

In this section, we first prove our main result on path forests. Then we apply it to give an alternative proof of our Theorem~\ref{1103c}.

\begin{lemma}\label{1203a}
Let $m\geq 2$. If $T$ is a path forest with two paths and $\vert T\vert \leq m^2$, then $T$ is $m$-burnable unless the path orders of $T$ are $m^2-2$ and $2$.
\end{lemma}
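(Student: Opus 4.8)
The plan is to recast $m$-burnability of a two-path forest as a purely numerical subset-sum question. By the ball-covering characterization of the burning number recalled in the Introduction, a path forest $T$ with path orders $a\ge b\ge 1$ is $m$-burnable precisely when the $m$ balls of radii $0,1,\dots,m-1$ can be distributed between the two paths so that each path is covered, and (by the same idea as Remark~\ref{1604a}, or equivalently Theorem~\ref{2803a} applied componentwise) a family of balls of radii $r_1,\dots,r_k$ covers a path of order $c$ if and only if $\sum_i(2r_i+1)\ge c$. Since $\sum_{r=0}^{m-1}(2r+1)=m^2$ and $\{2r+1:0\le r\le m-1\}=\{1,3,5,\dots,2m-1\}$, this is equivalent to the existence of a subset $S\subseteq\{1,3,5,\dots,2m-1\}$ with $a\le \sum S\le m^2-b$: give the length-$a$ path the balls whose radii correspond to $S$ and the remaining balls to the other path; the first path is then covered iff $\sum S\ge a$, the second iff $m^2-\sum S\ge b$. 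The interval $[a,\,m^2-b]$ is nonempty because $a+b\le m^2$, and it sits inside $[1,\,m^2-1]$ because $1\le b\le a\le m^2-b$.

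So the whole problem reduces to determining the set of subset sums of $O_m:=\{1,3,5,\dots,2m-1\}$. First I would establish that this set is exactly $\{0,1,2,\dots,m^2\}\setminus\{2,\,m^2-2\}$. Granting this, the lemma follows at once: whenever the interval $[a,\,m^2-b]$ contains an integer different from $2$ and $m^2-2$, that integer is an attainable subset sum and $T$ is $m$-burnable; the interval contains no such integer only when $[a,\,m^2-b]\subseteq\{2,\,m^2-2\}$, and since $2$ and $m^2-2$ are not consecutive for $m\ge 3$ (and coincide for $m=2$), this forces $[a,\,m^2-b]$ to be a single point equal to $2$ or to $m^2-2$. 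Together with $a\ge b$ and $a+b\le m^2$ this pins the path orders down to exactly $m^2-2$ and $2$ (which reads $2$ and $2$ when $m=2$). Conversely, for that one forest the required interval is $\{m^2-2\}$, which misses every subset sum of $O_m$, so it is genuinely not $m$-burnable, as claimed.

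The crux, and the step I expect to need the most care, is the subset-sum characterization of $O_m$ itself. The negative half is immediate: the only element of $O_m$ that is at most $2$ is $1$, so $2$ is not a subset sum, and since the elements of $O_m$ total $m^2$, passing to complements shows $m^2-2$ is not one either. For the positive half — that every remaining value in $[0,m^2]$ is a subset sum — I would induct on $m$ via $O_m=O_{m-1}\cup\{2m-1\}$: to hit a target $s$, if $s$ lies in the range realizable by $O_{m-1}$ and is not one of its two forbidden values $2$ and $(m-1)^2-2$, quote the inductive hypothesis verbatim; otherwise remove the element $2m-1$ and realize the residual target $s-(2m-1)$ inside $O_{m-1}$. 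A little bookkeeping is needed exactly at the borderline targets (notably $s=2m+1$, whose residual $2$ is forbidden below, and $s=(m-1)^2-2$, together with their mirror images near $m^2$), where one instead removes a suitable choice among the largest one or two elements so that the leftover target is both in range and not itself forbidden; the base cases $m=2,3$ are checked directly. Once this is in hand, nothing else in the argument is more than arithmetic. Alternatively, one can build $S$ from $s$ greedily from the largest element down, with a single special case needed for $s=2m+1$ when $m\ge 4$.
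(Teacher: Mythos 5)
Your argument is correct, and it takes a genuinely different route from the paper. The paper proves this lemma by induction on $m$, stripping $2m-1$ vertices off the longest path to reduce to a two-path forest of order $(m-1)^2$, and then disposing by hand of the exceptional configurations that the inductive hypothesis throws back (e.g.\ $(l_1,l_2)=(7,2)$ and $(9,7)$). You instead convert burnability of a two-path forest exactly into a numerical statement: $T$ with path orders $a\ge b$ is $m$-burnable if and only if some subset of $\{1,3,\dots,2m-1\}$ has sum in $[a,\,m^2-b]$, and then you characterize the subset sums of the first $m$ odd numbers as $\{0,1,\dots,m^2\}\setminus\{2,\,m^2-2\}$. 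I checked both halves: the reduction is legitimate (each ball lies entirely in one component, and a path of order $c$ is coverable by balls of radii $r_1,\dots,r_k$ iff $\sum_i(2r_i+1)\ge c$, using the ball-covering formulation of burning quoted in the introduction), and the subset-sum induction closes correctly --- the two values $2m+1$ and $(m-1)^2-2$ that are problematic for one of the two inductive branches are each recovered by the other branch once $m\ge 4$, with $m=2,3$ as base cases. The payoff of your method is that it yields a complete description of which two-path forests are $m$-burnable for arbitrary orders, not merely those of total order at most $m^2$, and it isolates all case analysis inside a clean, self-contained arithmetic lemma; the paper's induction is really the same ``remove $2m-1$'' recursion applied directly to burning sequences, but it must repeatedly re-examine the exceptional forest returned by the inductive hypothesis. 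The cost is that you must actually prove the subset-sum characterization, which your sketch does correctly but would need to be written out in full.
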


\begin{proof}
We argue by induction on $m$. The base step $m=2$ is straightforward. For the induction step, suppose $m>2$ and the lemma is true for $m-1$. Now we may assume $T$ is a path forest of order $m^2$ with two paths of path orders $l_1\geq l_2$.

Since $m>2$, it follows that $m^2 >2(2m-2)$, thus implying that $l_1\geq 2m-1$. Note that $l_1=2m-1$ can hold only when $m=3$ and in this case, $T$ is clearly $3$-burnable. 

Suppose $l_1>2m-1$ and let $l'_1=l_1-(2m-1)$. Let $T'$ be the path forest with path orders $l'_1$ and $l_2$. Since $l'_1+l_2= m^2-(2m-1)=(m-1)^2$, by the induction hypothesis, $T'$ is $(m-1)$-burnable unless 
$\{l'_1, l_2\}=\{(m-1)^2-2, 2\}$. So if
$(l_1, l_2)\neq (m^2-2, 2)$ and $(l_1,l_2)\neq (2m+1, (m-1)^2-2)$, we can deduce that $T$ is $m$-burnable (by placing the first burning source at the first path of $T$ accordingly, and then follow the burning sequence of $T'$).
The case $(l_1, l_2)=(2m+1, (m-1)^2-2)$ can only happen when $m\in \{3,4\}$, that is, when $(l_1, l_2)=(7,2)$ or $(l_1,l_2)=(9,7)$. The special case $T$ with $(l_1,l_2)=(9,7)$ is clearly $4$-burnable.
\end{proof}

\begin{lemma}\label{2003d}
	Let $n\geq 2$ and suppose $T$ is a path forest with $n$ paths. If the order of $T$ is at most $3n-2$, then $T$ is $n$-burnable unless the smallest path order of $T$ is two.
\end{lemma}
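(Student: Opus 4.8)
The plan is to argue by induction on $n$, mirroring the structure of Lemma~\ref{0603c} but keeping careful track of the exceptional case. The base case $n=2$ is: a path forest with two paths and order at most $4$ is $2$-burnable unless a path has order two; one checks by hand that the path orders $(3,1)$, $(2,1)$, $(1,1)$ are $2$-burnable (a single path of order $\le 3$ is $2$-burnable by Remark~\ref{1604a}, and order $1$ paths cost one round each), while $(2,2)$ is not $2$-burnable, which is exactly the stated exception. For the induction step, suppose $n>2$ and the lemma holds for $n-1$. Let $T$ have $n$ paths of orders $l_1\ge l_2\ge\dotsb\ge l_n$ with $|T|\le 3n-2$, and assume no path of $T$ has order exactly two.

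First I would dispose of the easy sub-cases. If $l_1=1$, then every path has order one, there are $n\le$ enough rounds, and $T$ is trivially $n$-burnable. If $3\le l_1\le 2n-1$, then burn the first path with one source placed at its centre (Remark~\ref{1604a}), leaving a path forest $T'$ with $n-1$ paths, order $|T'|\le 3n-2-3=3(n-1)-2$, and still no path of order two; by the induction hypothesis $T'$ is $(n-1)$-burnable, so $T$ is $n$-burnable. The only remaining possibility is $l_1\ge 2n$. Here I would delete the longest path and set $T''$ to be the resulting path forest with $n-1$ paths; then $|T''|\le 3n-2-2n = n-2$. Since $n-2 < 2(n-1)$, every path of $T''$ has order at most $n-2\le 2(n-1)-1$, so each can be burned by a single source at its centre, and there are only $n-1$ of them, hence $T''$ is $(n-1)$-burnable in the crude way (or, more cleanly, $|T''|\le n-2 \le 3(n-1)-2$ and if some path of $T''$ has order two one still finishes since $n-1$ sources comfortably suffice when the total order is only $n-2$). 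Either way $T$ is $n$-burnable. Note the case $l_1=2$ is excluded by hypothesis, so $l_1\in\{1\}\cup\{3,4,\dots\}$ and the above exhausts all cases.

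The main obstacle is the interaction between the deleted-path recursion and the exceptional ``order two'' clause: after deleting a path, the smaller forest may well contain a path of order two, so one cannot simply invoke the induction hypothesis in that regime. The resolution is that when $l_1\ge 2n$ the remaining forest is so small ($|T''|\le n-2$) that it is $(n-1)$-burnable unconditionally, bypassing the exception entirely; and when $3\le l_1\le 2n-1$ deleting the centre-burned first path preserves the ``no path of order two'' property, so the hypothesis applies cleanly. The one genuinely delicate point is verifying the base case $n=2$ completely — in particular confirming that $(2,2)$ is the \emph{only} non-$2$-burnable configuration of order $\le 4$ and that it is indeed captured by the stated exception — which is a short finite check.
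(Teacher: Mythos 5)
There is a genuine gap: you have misread the exceptional clause of the lemma. The exception is that the \emph{smallest} path order $l_n$ equals two, so in the induction step you must prove $n$-burnability for every forest with $l_n\neq 2$ --- including forests that contain paths of order two elsewhere in the sorted list, such as $(l_1,l_2,l_3)=(4,2,1)$ for $n=3$, or $(2,2,\dotsc,2,1)$ in general. Your induction step instead assumes that \emph{no} path of $T$ has order exactly two, and your closing remark that ``the case $l_1=2$ is excluded by hypothesis'' is false: the forest with orders $2,2,\dotsc,2,1$ has smallest path order one, so it falls under the lemma's conclusion, not its exception. As written you prove a strictly weaker statement, and that weaker statement would not suffice for the paper's use of the lemma (e.g.\ in the base case of Theorem~\ref{2003b} the exception must be pinned down to the single forest $2,2,\dotsc,2$).

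The ``main obstacle'' you describe in your last paragraph --- that after deleting a path the smaller forest may contain a path of order two --- is an artifact of the same misreading and does not actually arise. Deleting the \emph{longest} path of $T$ (which satisfies $3\leq l_1\leq 2n-1$ once the trivial cases $l_1\leq 2$ are dispatched; note that $l_1\geq 2n$ is impossible because the other $n-1$ paths contribute at least $n-1$ vertices, so that entire branch of your argument is vacuous) leaves a forest whose smallest path order is still $l_n\neq 2$, so the induction hypothesis, correctly stated, applies verbatim even if some $l_i=2$ with $i<n$. This is exactly the paper's one-line induction step. To repair your proof, replace ``no path has order two'' by ``$l_n\neq 2$'' throughout, and add the easy remaining case $l_1=2$, $l_n=1$: all paths have order one or two, and placing the sources on the order-two paths in the earliest rounds burns $T$ within $n$ rounds.
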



\begin{proof}
	We argue by induction on $n$. The base step $n=2$ is straightforward.
	For the induction step, suppose $n>2$ and the lemma is true for $n-1$.
	Let $T$ be a path forest with path orders	
	$l_1\geq l_2\geq \dotsb \geq l_n \geq 1$ such that $l_1+l_2+\dotsb +l_n\leq 3n-2$. 
	It can be easily deduced that $3\leq l_1\leq 2n-1$.

	Suppose $l_n\neq 2$.
	Let $T'$ be the path forest obtained by deleting the first path of $T$.	
	Since $l_2+l_3+\dotsb +l_n\leq 3n-2-3= 3(n-1)-2$ and $l_n\neq 2$, by the induction hypothesis, $T'$ is $(n-1)$-burnable and thus $T$ is $n$-burnable (because $l_1\leq 2n-1$).	
	\end{proof}


\begin{theorem}\label{2003b}
Let $m\geq n\geq 2$ and suppose $T$ is a path forest with $n$ paths. If 
$$\vert T\vert \leq m^2-(n-1)^2+1,$$	
	then $T$ is $m$-burnable unless $\vert T\vert=m^2-(n-1)^2+1$ and
	$T$ is the unique path forest with path orders  $m^2-n^2+2, \underbrace{2,2,\dotsc, 2}_{n-1 \text{ times}}$.	
\end{theorem}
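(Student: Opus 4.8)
The plan is to argue by induction on $m$, in the same spirit as the proof of Lemma~\ref{1203a}, peeling off a longest path of length $2m-1$ whenever the longest path is long, and otherwise reducing to one of the small-order results already established. Fix $n\geq 2$; the base case is $m=n$, where $\vert T\vert\leq n^2-(n-1)^2+1=2n$, and one checks directly that a path forest with $n$ paths of total order at most $2n$ is $n$-burnable unless all paths have order $2$, i.e.\ unless $T$ is the exceptional forest with path orders $\underbrace{2,\dotsc,2}_{n}$ (which matches $m^2-n^2+2=2$, $\underbrace{2,\dotsc,2}_{n-1}$). Actually the cleanest base case invokes Lemma~\ref{2003d}: when $\vert T\vert\leq 2n\leq 3n-2$ (valid for $n\geq 2$), $T$ is $n$-burnable unless its smallest path order is two, and a short case analysis pins down that the only non-burnable configuration of total order $\le 2n$ with $n$ paths each of order $\ge 1$ and smallest order $2$ is the all-$2$'s forest.

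For the induction step, assume $m>n$ and the statement for $m-1$ with the same $n$. Let $T$ have path orders $l_1\geq l_2\geq\dotsb\geq l_n\geq 1$ and total order at most $m^2-(n-1)^2+1$. If $l_1\leq 2m-1$, I would like to conclude via Lemma~\ref{0405a} or Lemma~\ref{0603c} or Theorem~\ref{2703c}: indeed Theorem~\ref{2703c} says any path forest with $n$ paths, all but one of order at most $m$, and total order at most $m^2-(n-1)^2$ is $m$-burnable, so the only cases left when $l_1\le 2m-1$ and $\vert T\vert\le m^2-(n-1)^2+1$ are either $\vert T\vert=m^2-(n-1)^2+1$, or $l_1>m$ with a second path also exceeding $m$; these finitely many borderline shapes need to be dispatched separately, identifying exactly when the exceptional forest arises. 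If instead $l_1>2m-1$, set $l_1'=l_1-(2m-1)$ and let $T'$ be the path forest with path orders $l_1',l_2,\dotsc,l_n$, so $\vert T'\vert\leq (m-1)^2-(n-1)^2+1$. By the induction hypothesis $T'$ is $(m-1)$-burnable, unless $T'$ is the exceptional forest with orders $(m-1)^2-n^2+2,\underbrace{2,\dotsc,2}_{n-1}$; in the non-exceptional case, placing the first source at the centre of the last $2m-1$ vertices of the first path of $T$ and following the burning sequence of $T'$ shows $T$ is $m$-burnable. So the crux reduces to: when is $T'$ forced to be the exceptional forest, and can we reroute?

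The main obstacle, as in Lemma~\ref{1203a}, will be the handful of \emph{exceptional reductions}: the cases where stripping $2m-1$ from $l_1$ lands $T'$ exactly on the forbidden forest, and the cases where $l_1\le 2m-1$ but $T$ is near the extremal order. When $T'$ is the exceptional forest we have $l_1=(m-1)^2-n^2+2+(2m-1)=m^2-n^2$ and $l_2=\dotsb=l_n=2$, so $T$ itself has orders $m^2-n^2,\underbrace{2,\dotsc,2}_{n-1}$ with total $m^2-n^2+2n-2=m^2-(n-1)^2-1<m^2-(n-1)^2+1$; here I would need a direct burning argument for this specific forest (e.g.\ splitting the long path differently, or using one of the small-forest lemmas on the residual), and similarly handle the alternative reduction $l_1=2m-1+((m-1)^2-2)$ in the two-path flavour. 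These special forests are finite in shape for each residual size, so each is resolved by an explicit burning schedule together with Remark~\ref{1604a}; the bookkeeping is that $m^2-n^2$ is itself large enough that $\lceil\sqrt{m^2-n^2+2}\,\rceil\le m$ comfortably unless $n$ is tiny, leaving only $O(1)$ genuinely small instances to check by hand. Finally, to see the bound is sharp and the exception genuine, one notes that the forest with orders $m^2-n^2+2,\underbrace{2,\dotsc,2}_{n-1}$ has total order $m^2-(n-1)^2+1$ but, by the ball-covering reformulation, cannot be covered by balls of radii $0,1,\dotsc,m-1$: the $n-1$ paths of order $2$ each swallow a distinct ball yet force a radius-$0$ or radius-$1$ ball to be ``wasted'', leaving too little reach for the long path — this is exactly the tightness remark following Theorem~\ref{2703c}.
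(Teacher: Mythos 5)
Your induction step for $l_1>2m-1$ (strip $2m-1$ from the longest path and invoke the statement for $m-1$) matches the paper's Case~1, but there are two genuine gaps. First, and most seriously, your treatment of the case $l_1\leq 2m-1$ does not work. You propose to fall back on Theorem~\ref{2703c}, but that theorem requires all but one path to have order at most $m$ --- exactly the hypothesis this theorem is meant to remove --- and the leftover configurations (several paths of order between $m+1$ and $2m-1$) are \emph{not} ``finitely many borderline shapes'': for each $m$ there are many such forests (e.g.\ $n=3$, $m=10$, path orders $19,19,19$), and none of Lemma~\ref{0405a}, Lemma~\ref{0603c}, or Theorem~\ref{2703c} applies to them. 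The paper closes this case with a second layer of induction, on $n$: since $\vert T\vert\leq m^2-(n-1)^2+1$ forces $l_1>2(m-n+1)$, deleting the longest path leaves a forest with $n-1$ paths of order strictly less than $(m-1)^2-(n-2)^2+1$, which is $(m-1)$-burnable by the statement for $n-1$ (the strict inequality rules out the exceptional forest), and the deleted path is burned by the first source because $l_1\leq 2m-1$. Your proposal fixes $n$ and inducts only on $m$, so this route is unavailable to you as written; you need the double induction.

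Second, your analysis of the exceptional reduction contains an arithmetic slip and misses a branch. If $T'$ equals the forbidden forest because $l_1-(2m-1)=(m-1)^2-n^2+2$, then $l_1=m^2-n^2+2$ (not $m^2-n^2$) and $\vert T\vert=m^2-(n-1)^2+1$, i.e.\ $T$ \emph{is} the excluded exceptional forest, so this branch is vacuous under the standing assumption --- no ``direct burning argument'' is needed. The branch that does require work is $l_1-(2m-1)=2$ with $l_2=(m-1)^2-n^2+2$, which forces $l_1=2m+1\geq l_2$ and hence $m\in\{n+1,n+2\}$; the case $m=n+1$ again reproduces the excluded forest, and the case $m=n+2$ (path orders $2n+5,2n+3,2,\dotsc,2$) must be burned explicitly, as the paper does using $(2m-1)+(2m-3)>2n+5$. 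Until the $l_1\leq 2m-1$ case and this branch are repaired, the proof is incomplete.
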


\begin{proof}
	 We argue by induction on $n$. The base step $n=2$ follows from Lemma~\ref{1203a}. For the induction step, suppose $n>2$ and the theorem is true for $n-1$ (and all $m\geq n-1$).
To prove that the theorem is true for all $m$ greater than or equal to this fixed $n$, we shall argue by induction on $m$. Suppose $m=n$ 
and so
$$\vert T \vert\leq m^2-(n-1)^2+1 =2n< 3n-2.$$
Then by Lemma~\ref{2003d}, $T$ is $n$-burnable
unless the smallest path order of $T$ is two, in which case  $T$ is the 
unique path forest with path orders $\underbrace{2,2,\dotsc, 2}_{n \text{ times}}$.

Now, suppose $m>n$ and the theorem is true for $m-1$ (for this fixed $n$). We may assume that $T$ is a path forest of order $m^2-(n-1)^2+1$ with path orders $l_1\geq l_2\geq \dotsb\geq l_n$ and 
$$(l_1, l_2, l_3, \dotsc, l_n)\neq (m^2-n^2+2,2,2,\dotsc, 2). \qquad (\ast)$$
We consider two cases.
	
\setcounter{case}{0}

\begin{case}
	$l_1\geq 2m$.
	
	Let $T'$ be the path forest with path orders $l_1-(2m-1), l_2, l_3, \dotsc, l_n$. Note that \mbox{$\vert T'\vert = (m-1)^2-(n-1)^2+1$}.
		By the induction hypothesis, $T'$ is  \mbox{$(m-1)$-burnable} and thus $T$ is $m$-burnable unless the path orders of $T'$ are 
	$(m-1)^2-n^2+2,  \underbrace{2,2,\dotsc, 2}_{n-1 \text{ times}}$.
	Hence, by assumption $(\ast)$, it follows that $l_1= 2m+1$, $l_2=(m-1)^2-n^2+2$, and
	$l_3=l_4=\dotsb =l_n=2$. Since $m>n\geq 3$, it can be verified that $2m+1\geq (m-1)^2-n^2+2$ is only possible when $m=n+1$ or $m=n+2$.	
	However, the case $m=n+1$ violates assumption $(\ast)$. Therefore, 
	it remains to see that the path forest $T$ with path orders
	$2n+5, 2n+3, \underbrace{2,2,\dotsc, 2}_{n-2 \text{ times}}$ is $(n+2)$-burnable. Indeed, since $(2n-1)+(2n+1)=4n>2n+5$, this path forest can be burned in $n+2$ rounds by placing the first burning source at the center of the second path of $T$, and the second and the third burning sources at the first path of $T$.	
\end{case}	
	
\begin{case}
	$l_1\leq 2m-1$.
	
	Note that
		$$m^2-(n-1)^2+1- 2n(m-n+1) = m^2-2mn+n^2=(m-n)^2> 0.$$
	It follows that $l_1> 2(m-n+1)$. Let $T''$ be the path forest with path orders $l_2, l_3, \dotsc, l_n$. Then
$$\vert T''\vert <{} m^2-(n-1)^2+1-2(m-n+1)
	=  (m-1)^2-(n-2)^2+1.$$
Therefore, since the theorem is true for $n-1$ by the first induction hypothesis, we know $T''$ is $(m-1)$-burnable and thus $T$ is $m$-burnable (because $l_1\leq 2m-1$).
	\end{case}	
The proof is complete.
\end{proof}

Our Second Main Theorem follows as a corollary.

Using our main result (Theorem~\ref{2003b}) on path forests, we give an alternative proof of Theorem~\ref{1103c}. One difference is that the series of proofs given in Section~\ref{0701a} only uses fairly weak technical results on path forests (Lemmas~\ref{0405a} and \ref{0603c}). The presentation of both proofs are provided so that it may give the readers a different perspectives and insight. We now restate Theorem~\ref{1103c} with an additional clause.

\begin{theorem}\label{2003a}
	Let $m>n\geq 2$. Then every $n$-spider of order at most $ m^2+n-2$ is $m$-burnable. Furthermore, if $l$ is the  length of the shortest arm, then the graph can be burned in $m$ rounds in such a way that after the head of the spider is burned, there are still at least $\min\{l,m-2\}$ rounds.	
\end{theorem}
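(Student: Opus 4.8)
The plan is to prove Theorem~\ref{2003a} by induction on $m$, using Theorem~\ref{2003b} (our main result on path forests) as the engine, in direct parallel to the earlier proof of Theorem~\ref{1103c}. The base case $m = n+1$ will be handled by Theorem~\ref{0403b}, which already gives the required ``after the head is burned, at least $\min\{l, n-1\}$ rounds remain'' clause (and $\min\{l,n-1\} = \min\{l,m-2\}$ here). For the inductive step, given an $n$-spider $T$ of order at most $m^2+n-2$ with arm lengths $l_1 \ge l_2 \ge \dots \ge l_n$, I would split on the size of the longest arm $l_1$. If $l_1 \le 2m-1$, invoke Lemma~\ref{0403a} together with Remark~\ref{1604c} (the head is burned in round $1$ or $2$, leaving $\ge m-2$ rounds, which dominates $\min\{l,m-2\}$). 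If $l_1 > 2m-1$, trim the longest arm by $2m-1$ vertices to form $T'$ with arm lengths $l_1 - (2m-1), l_2, \dots, l_n$, note $\vert T'\vert \le (m-1)^2 + n - 2$, apply the induction hypothesis to burn $T'$ in $m-1$ rounds with $\ge \min\{l', m-3\}$ rounds after its head, and prepend a burning source at the center of the removed length-$(2m-1)$ segment (Remark~\ref{1604a}).

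The place where I expect genuine bookkeeping rather than routine copying is the ``shortest arm'' clause when the longest arm is trimmed. The shortest arm length $l$ of $T$ may differ from that of $T'$: if $T$ has at least two longest arms, or if $l_1 - (2m-1) \ge l_n$, then the shortest arm length is unchanged and the induction hypothesis on $T'$ gives $\ge \min\{l, m-3\}$ rounds after the head, but we need $\ge \min\{l, m-2\}$, so a one-round gap must be closed. The resolution is that trimming the first arm does not touch the head's burning schedule at all — the first source is spent off the head, on the removed segment — so in the combined process the head of $T$ is burned at exactly the same round as the head of $T'$, but measured against $m$ total rounds instead of $m-1$, which buys back the extra round. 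I would phrase this carefully: if after the head of $T'$ there are $r \ge \min\{l', m-3\}$ rounds out of $m-1$, then after the head of $T$ there are $r + 1 \ge \min\{l', m-2\}$ rounds out of $m$, and one checks $\min\{l', m-2\} \ge \min\{l, m-2\}$ in the relevant subcase while the other subcase ($l_1 - (2m-1) < l_n$, so $l$ is the trimmed arm and we should instead delete a whole longest arm, as in Case~1.2 of Theorem~\ref{0403b}'s proof) is handled by the same trimming variant.

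Alternatively — and this may be the cleaner route since the paper advertises this as an application of Theorem~\ref{2003b} — I would reprove the $l_1 \le 2m-1$ base-type case directly from path forests rather than citing Lemma~\ref{0403a}: place the first source at the head of $T$, which burns the first $m-1$ vertices of every arm within $m$ rounds (this already accounts for $\ge m-2 \ge \min\{l,m-2\}$ free rounds after the head, burned in round $1$); the unburned remainder is a path forest $T^\ast$ with at most $n$ paths and $\vert T^\ast \vert \le m^2 + n - 2 - n(m-1) = m^2 - nm + 2n - 2$. One then checks $\vert T^\ast\vert \le (m-1)^2 - (n-1)^2 + 1$ (equivalently $0 \le m^2 - 2mn + n^2 - \text{small} = (m-n)^2 - O(1)$, which holds for $m > n$ with the genuinely tight cases $m = n+1, n+2$ needing the explicit exception in Theorem~\ref{2003b} to be ruled out or absorbed), and applies Theorem~\ref{2003b} with parameter $m-1$ to burn $T^\ast$ in $m-1$ rounds, completing $T$ in $m$ rounds. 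The main obstacle in this version is verifying that the exceptional path forest of Theorem~\ref{2003b} — path orders $(m-1)^2-n^2+2, 2, \dots, 2$ — either cannot arise as $T^\ast$ or, when it does, can be burned by a different placement (center the first source on a length-$2$ remainder pair, as done in Case~1 of Theorem~\ref{2003b}); this is exactly the kind of small-case check that the earlier Special Cases in Lemma~\ref{0403a} were designed to absorb, so I would mirror that structure.
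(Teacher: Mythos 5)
Your plan does not match the paper's proof, and the mismatch is not cosmetic: the step you flag as ``genuine bookkeeping'' is exactly where the argument breaks, and your proposed resolution rests on a miscount. When you trim the longest arm by $2m-1$, spend the first source on the trimmed segment, and then run the $(m-1)$-round burning sequence of $T'$ delayed by one round, each source of $T'$ placed at round $i$ of the old process sits at round $i+1$ of the new one and therefore grows to exactly the same radius $(m-1)-i$ as before. Consequently the head of $T$ is burned at round $t_0+1$ where the head of $T'$ was burned at round $t_0$, and the number of rounds remaining after the head is $m-(t_0+1)=(m-1)-t_0$ --- \emph{identical} to what the induction hypothesis gave for $T'$, namely at least $\min\{l',m-3\}$, not $\min\{l',m-2\}$. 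There is no ``extra round bought back by measuring against $m$ instead of $m-1$.'' The one-round deficit is fatal precisely when the shortest arm satisfies $l\geq m-2$: for instance the $3$-spider with arms $12,3,3$ and $m=5$ has $l_1>2m-1$, and your induction delivers only $\min\{3,2\}=2$ rounds after the head where the theorem demands $\min\{3,3\}=3$. Since the ``furthermore'' clause with the constant $m-2$ is exactly what the theorem is later used for (deleting arms of length at most $m-2$), weakening it to $m-3$ does not prove the stated result.

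The paper sidesteps this by inducting on $n$ rather than $m$: if $l_n\leq m-2$ it \emph{deletes} the shortest arm (the $(n-1)$-spider still has order at most $m^2+(n-1)-2$, and the inductive ``furthermore'' clause re-burns the deleted arm from the head), so one may assume every arm has length at least $m-1$; then the first source is placed at the head, which burns in round $1$ and trivially leaves $m-1\geq\min\{l,m-2\}$ rounds, and the residual path forest of order at most $m^2-mn+2n-3$ is handled by Theorem~\ref{2003b} (for $m\geq n+2$) or Lemma~\ref{2003d} (for $m=n+1$), with two exceptional configurations burned by starting at the first vertex of the $n$-th arm instead. Your ``alternative route'' is the right idea but is also incomplete as stated: without first forcing all arms to have length at least $m-1$, the residual forest after burning from the head need not satisfy $\vert T^{\ast}\vert\leq m^2-mn+2n-3$ (a very short arm contributes fewer than $m-1$ burned vertices); and even with that reduction the inequality $\vert T^{\ast}\vert\leq(m-1)^2-(n-1)^2+1$ genuinely fails for $m=n+1$ (e.g.\ $n=3$, $m=4$ gives $7\not\leq 6$), which is why the paper treats $m=n+1$ by a separate lemma rather than by Theorem~\ref{2003b}.
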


\begin{proof}
We argue by induction on $n$. For the base step $n=2$, let $m>n$ and suppose  $T$ is a $2$-spider of order at most $m^2$ with arm lengths $l_1\geq l_2$.
If $l_2\leq m-1$, then by placing the first burning source at the $(m-1-l_2)$-th vertex of the first arm, it is clear that this $2$-spider can be burned in $m$ rounds where the entire second arm is burned by this first burning source. Suppose $l_2\geq m$ and $l_2\neq m+1$. If we place the first burning source at the head of the spider, the first $m-1$ vertices of each arm would be burned in $m$ rounds. The remaining $(m-1)^2$ vertices
unburned by the first source form two independent paths, neither which has order two. Hence, by Lemma~\ref{1203a}, this path forest is $(m-1)$-burnable and thus $T$ is $m$-burnable. The only remaining case for the $2$-spider is when $l_2=m+1$. In such a scenario, it is straightforward to check that there is a burning process of $m$ rounds where the first burning source is placed at the first vertex of the second arm. Therefore, the base step is complete.

For the induction step, suppose $n>2$ and the theorem is true for $n-1$. Let $m>n$ and suppose $T$ is an $n$-spider of order at most $m^2+n-2$ with arm lengths $l_1\geq l_2\geq \dotsb\geq l_n$.

\begin{claim}
We may assume that $l_n\geq m-1$.	
\end{claim}

\begin{proof}[Proof of the claim]
Suppose $l_n\leq m-2$.
 Let $T'$ be the $(n-1)$-spider obtained by deleting the $n$-th arm of $T$. 
 Since $\vert T'\vert \leq m^2+(n-1)-2$,
 by the induction hypothesis, $T'$ can be burned in $m$ rounds in such a way that after the head of the spider $T'$ is burned, there are still at least $\min\{l_{n-1}, m-2  \}$ rounds. The same burning sequence can clearly burn $T$ because  $\min\{l_{n-1}, m-2  \}\geq l_n$.\renewcommand{\qedsymbol}{}
\end{proof}

With the claim, we now have $l_i\geq m-1$ for all $1\leq i \leq n$. We will show that except for some special cases, $T$ is $m$-burnable with the first burning source being the head and thus there are $m-1$ rounds left after the head is burned. Note that the first $m-1$ vertices in each arm will be burned by the first source in $m$ rounds. The remaining vertices form a path forest $T''$ with $k$-paths, where $1\leq k\leq n$, such that
$$\vert T''\vert \leq m^2+n-3-n(m-1)= m^2-mn+2n-3.$$
If $k=1$, then $T''$ is $(m-1)$-burnable because it can be shown that $\vert T''\vert < (m-1)^2$ (due to $m>n>2$). Suppose \mbox{$k\neq 1$}. 


\setcounter{case}{0}

\begin{case}
$m\geq n+2$. Then
\begin{align*}
\vert T''\vert \leq {}& m(m-n)+2(n-2)+1\\
\leq {}& m(m-n)+(m-n)(n-2)+1\\
={}& (m+n-2)(m-n)+1\\
={}& (m-1)^2- (n-1)^2+1\\
\leq {}& (m-1)^2- (k-1)^2+1.
\end{align*}
By Theorem~\ref{2003b}, $T''$ is $(m-1)$-burnable and therefore $T$ is $m$-burnable  unless 
$\vert T''\vert=(m-1)^2- (k-1)^2+1$
and the path orders of $T''$ are $(m-1)^2-k^2+2, \underbrace{2,2,\dotsc, 2}_{k-1 \text{ times}}$. Note that this exceptional $T''$ is possible only 
 when $k=n$ and $m=n+2$. This leads us to the following special case.

\setcounter{special case}{0}

\begin{special case}
$m=n+2$, $l_1= 3n+4$, and $l_2=l_3=\dotsb = l_n=n+3$.

We place the first burning source at the first vertex of the $n$-th arm and thus in a burning process of $m$ rounds, there are $m-2$ rounds left after the head is burned.
Now, this first source would  burn the first $n$ vertices of each of the first $n-1$ arms and the first $n+2$ vertices of the $n$-th arm in $n+2$ rounds.  Hence, the remaining vertices form $n$ independent paths of orders $2n+4,  \underbrace{3,3,\dotsc, 3}_{n-2 \text{ times}}, 1$.
It is easy to see that this path forest can be burned with the remaining $n+1$ burning sources since $(2n-1)+(2n+1)=4n>2n+4$.
\end{special case}
\end{case}

\begin{case}
$m=n+1$. 

Then $\vert T''\vert\leq 3n-2$.
First, suppose $2\leq k \leq n-1$.
Then $$n^2-(k-1)^2+1\geq n^2-(n-2)^2+1=4n-3>3n-2$$ and thus the order of
$T''$ is strictly less than $n^2-(k-1)^2+1$. Hence, by Theorem~\ref{2003b}, $T''$ 
is $n$-burnable and thus $T$ is \mbox{$m$-burnable}. Now, suppose $k=n$.
Then by Lemma~\ref{2003d}, $T''$ is $n$-burnable and thus $T$ is $m$-burnable unless the smallest path order of $T''$ is two. This leads us to the following special case.

\begin{special case}
$m=n+1$ and $l_n=n+2$.

As in Special Case~1, we place the first burning source at the first vertex of the $n$-th arm. This first source would burn the first $n-1$ vertices of each of the first $n-1$ arms and the first $n+1$ vertices of the $n$-th arm in $n+1$ rounds.
Hence, the remaining vertices form $n$ independent paths of orders $l_1'\geq  l_2'\geq \dotsb\geq l_n'$ where
$$l_1'+ l_2'+\dotsb + l_n'\leq 4n-4= (n+1)^2+n-2-(n-1)(n-1)-(n+1)-1,$$
$l'_n=1$ and $l_{n-1}'\geq 3$. Hence, by Lemma~\ref{0405a}, this path forest is $n$-burnable and thus $T$ is $(n+1)$-burnable.

\end{special case}
\end{case}
The proof is now complete.
\end{proof}

\section{Conclusion}

This work is initiated based on an intuition that the more deviation from a path a tree is, the less number of rounds is needed to completely burn it. This is reflected in our main result on spiders: the number of arms 
determines how much larger the order of a spider can go beyond $m^2$ before being $m$-burnable is not guaranteed. Based on this finding, we conceive that the number of leaves 
of a tree is one deciding characteristic of its burning number. As a future direction, we would pursue the following conjecture.

\begin{conjecture} Suppose $m> n$.	
	 Every tree with $n$ leaves of order at most $m^2+n-2$ is $m$-burnable.	
\end{conjecture}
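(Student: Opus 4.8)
The plan is to mirror the structure of the proof of the First Main Theorem, replacing ``arm'' by \emph{pendant path} (a maximal path running from a leaf to the nearest vertex of degree at least three, that branch vertex excluded). Concretely, I would run a double induction: an outer induction on the number of leaves $n$ and, for each fixed $n$, an inner induction on $m$ over the range $m>n$. The base $n=2$ is immediate, since a tree with two leaves is a path and Theorem~\ref{2803a} applies whenever the order is at most $m^2$. For the outer step I would want a strengthened inductive hypothesis in the spirit of Theorems~\ref{0403b} and \ref{2003a}: not merely that $T$ is $m$-burnable, but that it can be burned in $m$ rounds leaving a prescribed number of rounds after some designated vertex is reached by the fire, the natural candidate being the attachment point of the shortest pendant path. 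This slack clause is precisely what would let one delete a short pendant path, invoke the hypothesis for $n-1$ leaves (legitimate since $m>n>n-1$), and then cover the deleted path for free by continuing the same burning sequence.

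For the inner induction on $m$, the long-pendant-path case should go through essentially verbatim: if some pendant path $P$ has length greater than $2m-1$, delete its last $2m-1$ vertices to obtain a tree $T'$ with the same number of leaves and $|T'|\le (m-1)^2+n-2$; by the inner hypothesis $T'$ is $(m-1)$-burnable, and prepending a burning source at the centre of the deleted segment (Remark~\ref{1604a}) burns $T$ in $m$ rounds. The genuinely new case is when every pendant path has length at most $2m-1$, the analogue of Lemma~\ref{0403a}. Here one would place the first burning source at a well-chosen vertex, say a centroid or a branch vertex, so that the ball of radius $m-1$ around it absorbs the bulk of the tree, leaving an unburned remainder that is a \emph{forest of trees} of total order far below the conjectural threshold. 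Finishing this case would require weak burning bounds for such tree forests, analogues of Lemmas~\ref{0405a} and \ref{0603c}; the cleanest route is probably to establish, as a preliminary, a crude estimate asserting that a forest with $c$ components, $n$ leaves in total, and order bounded by some explicit near-linear function of $m,n,c$ is $m$-burnable, and then feed that in. The inner base case $m=n+1$ would be handled separately, as in Theorem~\ref{0403b}, combining the bushy-tree argument with these tree-forest lemmas.

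I expect the main obstacle to be twofold. First, a tree has no canonical ``head'', so the slack-strengthened statement must be organised around a designated vertex that behaves well both under deletion of pendant paths and under the $(2m-1)$-chopping operation, and producing a single clause that survives all the case splits — in particular the bushy case, where the first source is not placed at that vertex at all — is delicate. Second, and more seriously, the bushy case demands control of the leftover tree forest, and a tree with many branch vertices can leave behind a forest with many components and many leaves; the finiteness bookkeeping that made the spider argument tractable, namely that removing the head of a spider always yields a \emph{path} forest, has no exact counterpart here, so one must either develop genuinely new multi-component tree-burning estimates or find a reduction of arbitrary $n$-leaf trees to spiders, neither of which looks routine. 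This is where I would expect the real work to lie, and it is presumably why the statement is offered only as a conjecture.
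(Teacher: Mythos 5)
The statement you are addressing appears in the paper only as a conjecture; the paper contains no proof of it, so the only benchmark is the spider machinery your plan would have to generalise. More importantly, your proposal is not a proof: it is a plan whose decisive steps are left open, as you yourself concede in your final paragraph. The long-pendant-path reduction (chop $2m-1$ vertices off a pendant path of length greater than $2m-1$, apply the inner inductive hypothesis to the resulting tree of order at most $(m-1)^2+n-2$, and prepend a source at the centre of the deleted segment) does go through verbatim, exactly as in Theorem~\ref{1103c}. But the entire weight of the spider argument rests on the complementary case, Lemma~\ref{0403a}, where every arm is short, and it is precisely there that your plan has no content. For a spider, placing the first source at the unique branch vertex leaves a \emph{path forest}, and the paper's accounting (the inequality $(\ast)$, Lemma~\ref{0405a}, Lemma~\ref{0603c}) is carried out arm by arm against that single centre. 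A general tree with $n$ leaves may have up to $n-2$ branch vertices; a ball of radius $m-1$ about any chosen vertex leaves a forest whose components are themselves trees with further branch vertices, and you supply no burning estimate for such forests --- you only remark that one is ``probably'' needed. Without that lemma the bushy case, and hence the inner base case $m=n+1$ (the analogue of Theorem~\ref{0403b}, which in the spider setting is the hardest part of the whole proof), is unproved.

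There is a second, subtler gap: the slack clause you want to carry through the outer induction (``at least $\min\{l,m-2\}$ rounds remain after the designated vertex is burned'') is never actually formulated. In the spider proofs this clause is anchored to the head, and Theorems~\ref{0403b} and~\ref{2003a} must verify it separately in every case split, including those where the first source is \emph{not} placed at the head. For a tree with several branch vertices the natural designated vertex is not stable under your own reductions: deleting a pendant path can drop its attachment point to degree two, merging two pendant paths into one and changing both the leaf count and the identity of the ``shortest pendant path'' in a way your induction on $n$ does not track. Until the strengthened statement is written down precisely and shown to survive both the pendant-path deletion and the $(2m-1)$-chopping, the outer induction step is also incomplete. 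In short, your proposal correctly identifies the architecture a proof would need, but the two components it defers --- burning bounds for multi-component forests of branching trees, and a deletion-stable slack clause --- are exactly the open content of the conjecture.
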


As a conclusion, this work presents a new perspectives towards the burning number conjecture. Although what we aim is stronger than the burning number conjecture, as shown by this work, this new approach may be more effective and feasible as utilization of the right characteristics of graphs may turn out to be the key in understanding burning numbers.

\section{Acknowledgment}

This work is completed during the sabbatical leave of the second author from 15 Nov 2018 until 14 Aug 2019, supported by Universiti Sains  Malaysia. He would like to extend his gratitude to Institute of Mathematical Sciences, University of Malaya for hosting his sabbatical leave.


\end{document}